\documentclass[11pt]{article}

\usepackage{amsmath,amssymb,amsthm,mathtools}

\numberwithin{equation}{section}

\title{Formation of singularities for a family of one-dimensional quasilinear wave equations beyond the variational case}
\author{
Yuusuke Sugiyama\\
Department of Mathematics, Tokyo University of Science\\
Kagurazaka 1-3, Shinjuku-ku, Tokyo 162-8601, Japan\\
\texttt{sugiyama.y@rs.tus.ac.jp}
}
\date{}

\usepackage{amssymb,amsmath,amsthm,mathrsfs,delarray,enumerate}
\usepackage{graphics}

\theoremstyle{definition} 
\newtheorem{Def}{Deffinition}[section]
\newtheorem{proposition}[Def]{Proposition}
\newtheorem{lemma}[Def]{Lemma}
\newtheorem{theorem}[Def]{Theorem}
\newtheorem{remark}[Def]{Remark}

\newcommand{\N}{\mathbb{N}} 
\newcommand{\R}{\mathbb{R}} 

\makeatletter

\@addtoreset{equation}{section}

\begin{document}
\maketitle %
\begin{abstract}
We consider finite-time singularity formation for classical solutions to the
following parameterized nonlinear wave equation:
\[
  u_{tt}=c(u)^2u_{xx}+\lambda c(u)c'(u)(u_x)^2,
\]
where \(\lambda\in[0,2]\) is a parameter. In previous works, it is known that
finite-time blow-up solutions exist for \(0<\lambda\le1\) and for
\(\lambda=2\).

The cases \(\lambda=1\) and \(\lambda=2\) are known as the variational wave
equation and the \(p\)-system, respectively, and they possess symmetric
structures or conservation laws. For \(0<\lambda<1\), the blow-up construction
is based on the fact that, after decomposing the wave into two Riemann
variables, one component can be kept sufficiently small and hence does not
prevent the other component from blowing up. In the present paper, we treat the
remaining intermediate case \(1<\lambda<2\). This case is essentially different
from the previous one, since both Riemann variables may grow.

The key idea is to introduce suitable supremum functions for the Riemann
variables and to derive a comparison principle through right Dini derivatives.
This allows us to control the relative size of the two components and to obtain
a Riccati-type differential inequality for the dominant supremum function. 
\end{abstract}
%

\section{Introduction}

\subsection{Background and known results}
In the present paper, we consider the Cauchy problem for the following family
of one-dimensional quasilinear wave equations parameterized by \(\lambda\):
\begin{eqnarray} 
\left\{  \begin{array}{ll} \label{req}
   u_{tt} =  c(u)^{2}u_{xx} + \lambda c(u)c'(u)( u_x)^2,  \ \ (t,x) \in (0,T] \times \R, \\   
  u(0,x) = u_0 (x),\ \ x \in \R,                \\            
    \partial_t u(0,x) = u_1 (x), \ \ x \in \R.  
\end{array} \right.  
\end{eqnarray} 
Here \(u(t,x)\) is an unknown real-valued function and
\(c'(\theta)=dc(\theta)/d\theta\). We assume that \(c\) is smooth in a
neighborhood of \(0\) and satisfies
\begin{eqnarray}
c(0) =c_0 >0  \ \mbox{and} \ c'(0)=c_1  >0. \label{non-deg1}
\end{eqnarray}

The parameterized nonlinear wave equation \eqref{req} was introduced by
Glassey, Hunter and Zheng \cite{GHZ2}  (see also Chen and Shen \cite{GCYS}).
Equation \eqref{req} can be rewritten as
\begin{eqnarray*}
u_{tt} - c(u)^{2-\lambda} [ c(u)^{\lambda} u_x ]_x =0,
\end{eqnarray*}
and has different mathematical and physical interpretations depending on
\(\lambda\).

If \(\lambda=2\), then the parameterized nonlinear wave equation
\eqref{req} is formally equivalent to the conservation system
\begin{equation*}
  \partial_t
  \begin{pmatrix}
    U \\ V
  \end{pmatrix}
  -
  \partial_x
  \begin{pmatrix}
    V \\ p(U)
  \end{pmatrix}
  =0,
\end{equation*}
where
\[
  U(t,x)=u(t,x),\qquad
  V(t,x)=\int_{-\infty}^{x} u_t(t,y)\,dy,
  \qquad
  p'(\theta)=c(\theta)^2 .
\]
This conservation system is referred to as a \(p\)-system and describes
several phenomena of wave propagation in nonlinear media, including
electromagnetic waves in transmission lines, shearing motion in
elastic-plastic rods, and one-dimensional gas dynamics (see  Cristescu  \cite{NC}, Zabusky \cite{z}, and Landau and Lifshitz \cite{ll}).
In the study of the \(p\)-system, the following variables, which are called
Riemann variables, play an important role:
\begin{equation} \label{Riev}
  R=u_t+c(u)u_x,\qquad
  S=u_t-c(u)u_x .
\end{equation}
For hyperbolic systems of conservation laws, necessary and sufficient
conditions for the global existence of classical solutions have been
extensively studied. Roughly speaking,  if
\(R\) and \(S\) are non-positive at the initial time, then the corresponding
classical solution exists globally in time. On the other hand, if either
\(R\) or \(S\) takes a positive value at some point initially, then a
singularity, such as a shock, may form in finite time (e.g. Johnson \cite{JJ}, Klainerman and  Majda \cite{km}, Lax \cite{lax} and  Chen, Pan and Zhu \cite{GPZ}).

When \(\lambda=1\), the equation in \eqref{req} is called the
variational wave equation:
\[
  u_{tt}-c(u)\bigl(c(u)u_x\bigr)_x=0.
\]
As its name suggests, this equation  possesses the energy conservation law
\[
  E(t)
  =
  \int_{\R}
  \left\{
    u_t(t,x)^2+c(u(t,x))^2u_x(t,x)^2
  \right\}\,dx
  =
  E(0).
\]
The variational wave equation has physical backgrounds, including nematic
liquid crystals and long waves on a dipole chain in the continuum limit
(see \cite{GHZ2}).
Zhang and Zheng \cite{zz1} showed the existence of global classical solutions
under the assumption that
\[
  R(0,x)\leq0,\qquad S(0,x)\leq0
  \quad (x\in\R).
\]
Glassey, Hunter and Zheng \cite{GHZ} constructed finite-time blow-up
solutions for suitable initial data violating this non-positive condition.

Using the method of Zhang and Zheng \cite{zz1}, the author in \cite{s1}
extended their global existence result to the range \(0\leq\lambda\leq2\)
under the same non-positive condition on the initial Riemann variables.
Under this condition, the non-positivity of \(R\) and \(S\) is preserved.
Moreover, one can obtain time-independent lower bounds for \(R\) and \(S\).
In \cite{s6}, the blow-up result for the variational case \(\lambda=1\) was
extended to the range \(0<\lambda\leq1\).

Recently, the author in \cite{s7} proved the global existence of classical solutions for the case \(\lambda=0\) for arbitrary smooth initial data, without assuming the non-positive condition on both \(R(0,x)\) and \(S(0,x)\).

The purpose of the present paper is to construct a finite-time blow-up solution to \eqref{req} in the remaining case \(1<\lambda<2\), following the ideas of Glassey, Hunter and Zheng \cite{GHZ} and the author's previous work \cite{s6}.

\subsection{Main theorem}

We are now in a position to state the main theorem of the present paper.

\begin{theorem}\label{mainthm}
Let \(1<\lambda<2\). Let \(\phi\in C_0^\infty(\mathbb R)\) satisfy
\[
  \phi'(0)<0
\]
and
\[
  \sup_{x\in\mathbb R}(-\phi'(x))
  >
  \sup_{x\in\mathbb R}\phi'(x).
\]
For \(\delta>0\) and \(\varepsilon>0\), we choose the initial data as
\[
  u_0(x)=\delta\phi(x/\varepsilon),
  \qquad
  u_1(x)=-c(u_0(x))\partial_xu_0(x).
\]
Then there exist constants \(\delta_0,C>0\), depending only on
\(c,\lambda,\phi\), such that, for every \(\varepsilon>0\) and every
\(0<\delta\le \delta_0\), the lifespan \(T^*\) of the corresponding
classical solution is finite and satisfies
\[
  T^* \le C\frac{\varepsilon}{\delta}.
\]

In particular, the gradient blow-up occurs in finite time. Namely,
\[
  \limsup_{t\to T^*}
  \left(
    \|u_t(t)\|_{L^\infty}
    +
    \|u_x(t)\|_{L^\infty}
  \right)
  =
  \infty .
\]
Furthermore, the solution satisfies
\[
  \sup_{[0,T^*)\times\mathbb R}|u(t,x)|
  \le C\delta,
\]
\[
  \int_{\mathbb R}
  \left(
    |\partial_tu(t,x)|^{2/\lambda}
    +
    |\partial_xu(t,x)|^{2/\lambda}
  \right)\,dx
  \le
  C\delta^{2/\lambda}\varepsilon^{1-2/\lambda},
\]
and
\[
  \frac{c_0}{2}
  \le
  c(u(t,x))
  \le
  2c_0
\]
for \((t,x)\in[0,T^*)\times\mathbb R\), where the constant \(C\) is independent of
\(\delta\) and \(\varepsilon\).
\end{theorem}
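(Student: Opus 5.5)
We work throughout with the Riemann variables \eqref{Riev}. Using $u_t=(R+S)/2$ and $u_x=(R-S)/(2c)$, a direct computation from \eqref{req} gives the diagonal system
\begin{align*}
R_t-cR_x&=\frac{c'(u)}{4c(u)}\bigl[\lambda R^2+(2-2\lambda)RS-(2-\lambda)S^2\bigr],\\
S_t+cS_x&=\frac{c'(u)}{4c(u)}\bigl[\lambda S^2+(2-2\lambda)RS-(2-\lambda)R^2\bigr].
\end{align*}
For the chosen data we have $R(0,x)=0$ and $S(0,x)=-2c(u_0)u_0'(x)=-2c(u_0)\tfrac{\delta}{\varepsilon}\phi'(x/\varepsilon)$. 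Hence $\sup S(0)\sim \tfrac{2c_0\delta}{\varepsilon}\sup(-\phi')$, and the hypothesis $\sup(-\phi')>\sup\phi'$ says that the positive part of $S(0)$ strictly dominates its negative part. We rescale $x\to x/\varepsilon$ and $t\to t/\varepsilon$. Then only $\delta$ remains, and the bound $T^*\le C\varepsilon/\delta$ becomes a bound $O(1/\delta)$ in rescaled time.

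\textbf{Step 1 (a priori bounds, bootstrap).} We work on a maximal interval on which $|u|\le C\delta$, which guarantees $c_0/2\le c(u)\le 2c_0$ and $c'(u)\ge c_1/2$ for $\delta\le\delta_0$. The amplitude bound comes from the conserved-type quantity. Writing \eqref{req} as $u_{tt}-c^{2-\lambda}(c^\lambda u_x)_x=0$, one checks that the densities $c(u)^{\alpha}|R|^{2/\lambda}$ and $c(u)^{\alpha}|S|^{2/\lambda}$, for a suitable power $\alpha$, satisfy transport inequalities whose sources cancel after summation. This is the analogue of the energy for $\lambda=1$, and it is what produces the exponent $2/\lambda$. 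This yields the stated $L^{2/\lambda}$ bound $C\delta^{2/\lambda}\varepsilon^{1-2/\lambda}$. Since $u_x=(R-S)/(2c)$, combining this with an $L^\infty$ bound on $u_x$ over the short time scale, or with Hölder on the support, which spreads at most at speed $2c_0$, gives $|u|\le C\delta$. This closes the bootstrap.

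\textbf{Step 2 (comparison via Dini derivatives — the main obstacle).} Define
\[
M(t)=\sup_x S(t,x),\qquad m(t)=\sup_x\bigl(-S(t,x)\bigr)_+,\qquad N(t)=\sup_x|R(t,x)|.
\]
For $\lambda>1$ the $RS$ and $-(2-\lambda)R^2$ terms in the $S$ equation act against blow-up, and $R$ is driven by $-(2-\lambda)S^2$. So unlike the case $0<\lambda<1$, $R$ cannot simply be kept small. Instead we aim to prove the relative bounds
\[
N(t)\le \kappa M(t)\quad\text{and}\quad m(t)\le \theta M(t),
\]
with $\kappa,\theta$ small enough that
\[
\lambda-(2\lambda-2)\kappa-(2-\lambda)\kappa^2\ge \eta>0 .
\]
The tool is the standard fact that the upper right Dini derivative of a supremum along characteristics is bounded by the value of the right-hand side at (approximate) maximizers. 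At a point where $|R|=N=\kappa M$, we compare $D^+N$ with $\kappa D^+M$ and show that the ratio $N/M$ cannot cross $\kappa$. The same argument applied to the ratio $m/M$ uses the strict initial gap $\theta_0<1$ coming from $\sup(-\phi')>\sup\phi'$.

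I expect this step to be delicate. The right-hand sides are quadratic, so the comparison must be made between $\frac{d}{dt}\log N$ and $\frac{d}{dt}\log M$ rather than between the functions themselves. The constants also have to be chosen using $\lambda<2$ to make $(2-\lambda)$ small relative to $\lambda$. If a fixed $\kappa$ fails, I would first try a time-dependent threshold $\kappa(t)$ that grows slowly, together with the observation that $R$ starts at $0$.

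\textbf{Step 3 (Riccati inequality and conclusion).} With the bounds from Step 2, at near-maximizers of $S$ we have
\[
D^+M(t)\ge \frac{c'}{4c}\bigl[\lambda-(2\lambda-2)\kappa-(2-\lambda)\kappa^2\bigr]M^2\ge \frac{c_1\eta}{16c_0}M^2 .
\]
Since $M(0)\ge c\,\delta/\varepsilon$, integrating this Riccati inequality shows that $M$ becomes infinite before $t=C\varepsilon/\delta$. This gives the lifespan bound and the gradient blow-up, because $R$ and $S$ control $u_t$ and $u_x$. The bounds $|u|\le C\delta$, $c_0/2\le c\le 2c_0$ and the $L^{2/\lambda}$ estimate persist up to $T^*$ by Step 1.
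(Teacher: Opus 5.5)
Your architecture (Riemann variables, a Dini-derivative comparison of supremum functions, a Riccati inequality for $\sup_x S$) is the paper's. However, Step 1 is false as stated, and the estimate that actually closes the argument is missing.

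\textbf{Step 1 fails.} For $p=2/\lambda$ one has
\[
\partial_t\bigl(|R|^p+|S|^p\bigr)-\partial_x\bigl(c(u)(|R|^p-|S|^p)\bigr)
=\frac{2-\lambda}{2\lambda}\,\frac{c'(u)}{c(u)}\,RS(R-S)\bigl(|R|^{p-2}-|S|^{p-2}\bigr),
\]
which vanishes identically only for $\lambda\in\{1,2\}$. A weight $c(u)^\alpha$ only adds $\alpha\frac{c'}{c}c^\alpha RS\bigl(|R|^{p-2}R+|S|^{p-2}S\bigr)$. This is a different function of $(R,S)$, so no $\alpha$ cancels the source. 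Since $R\le 0$ while $S$ changes sign, the source also has no sign.

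Hence the $L^{2/\lambda}$ bound is not a conservation law. The paper bounds the source by $4\{A r^p+A^{p-1}P^{2-p}|S|^p\}$, with $r=-R$, $A=\sup(-R)$, $P=\sup S$, and closes by Gronwall. That requires $\int_0^t\{A+A^{p-1}P^{2-p}\}\,ds$ to be small.

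Your route to $|u|\le C\delta$ also fails. No $L^\infty$ bound on $u_x$ survives up to $T^*$. H\"older over a support of length $\lesssim\varepsilon/\delta$ gives only $|u|\lesssim(\varepsilon/\delta)^{1-1/p}E_0^{1/p}\sim\delta^{\lambda/2}$, not $C\delta$, and it already presupposes the $L^p$ bound.

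\textbf{Missing estimate.} A bound $\sup|R|\le\kappa\sup S$ is too weak for any $\kappa$. The Riccati inequality gives $\int_0^tP^\gamma\,ds\le CP(0)^{\gamma-1}$ only for $\gamma<1$. By contrast, $\int_0^tP\,ds$ grows like $\log(P(t)/P(0))$ and is unbounded up to $T^*$. So neither the source term nor $u(t,x)=u_0(x_+(0))+\int_0^tR(s,x_+(s))\,ds$ can be controlled this way.

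The paper proves the refined bound $A(t)\le K_2E_0P(t)^{2-2/\lambda}$:
\begin{itemize}
\item Integrating \eqref{ch-req} along a minus characteristic from $R(0,\cdot)=0$, and using that $P$ is nondecreasing, gives $-R(t,x)\lesssim\int_0^tS^2\,ds\le P(t)^{2-p}\int_0^t|S(s,x_-(s))|^p\,ds$.
\item The last integral is $\lesssim E_0$, by integrating the $L^p$ identity over characteristic triangles.
\item Since $2-p<1$ and $p(2-p)<1$, this yields $\int_0^tA\,ds\lesssim E_0P(0)^{1-p}\lesssim\delta$ and $\int_0^tA^{p-1}P^{2-p}\,ds\lesssim\delta^{p-1}$.
\end{itemize}
These close $|u|\le C\delta$, $E\le 2E_0$, the characteristic $L^p$ bound and the refined bound itself in a joint four-part bootstrap.

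\textbf{Step 2 does not close for small $\kappa$.} At a maximizer of $-R$ where $S\approx M$, you get $\partial_-(-R)\approx\frac{c'}{4c}(2-\lambda)M^2>0$. The Riccati lower bound only gives $\kappa D_-M\gtrsim\kappa M^2$, so no contradiction arises.

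The paper takes $\kappa=1$. Using $R\le0$ (Lemma \ref{zz1}) and $\sup(-S)\le\sup S$, at a touching point $-R=P$ one has
\[
(2-\lambda)\sigma^2-2(\lambda-1)P\sigma-\lambda P^2\le 0 \quad\text{for } \sigma\in[-P,P].
\]
Hence $D_+A\le 0<D_-P$ there. This is a pure sign argument, with no need to compare values of $c'/c$ at different points.

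With $R\le0$, the cross term $(2-2\lambda)RS$ is nonnegative at maxima of $S$, so it helps blow-up rather than opposing it. Then $A\le P$ alone gives $D_-P\ge a_*\min\{\lambda,4(\lambda-1)\}P^2$ for every $\lambda\in(1,2)$. No smallness of $\kappa$ or of $2-\lambda$ is needed.
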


The following proposition gives a more precise property of the blow-up solution
constructed in Theorem \ref{mainthm}.

\begin{proposition}\label{prop:holder}
The blow-up solution constructed in Theorem \ref{mainthm} satisfies
\[
  |u(t,x)-u(s,y)|
  \le
  C\delta\varepsilon^{\lambda/2-1}
  \left(
    |t-s|^{1-\lambda/2}
    +
    |x-y|^{1-\lambda/2}
  \right)
\]
for all \(x,y\in\mathbb R\) and \(t,s\in[0,T^*)\), where \(C\) is independent
of \(\delta\) and \(\varepsilon\). Furthermore, there exists the limit
\[
  u(T^*,x)=\lim_{t\to T^*}u(t,x)
\]
for all \(x\in\mathbb R\), and the solution satisfies
\[
  |u(T^*,x)-u(T^*,y)|
  \le
  C\delta\varepsilon^{\lambda/2-1}
  |x-y|^{1-\lambda/2}
\]
for all \(x,y\in\mathbb R\).
\end{proposition}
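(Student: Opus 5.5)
The approach is to derive Hölder continuity from a uniform weighted $L^\infty$ bound. For $\lambda=2$ the natural conserved density of the $p$-system is linear in $R,S$. For general $\lambda$, the transport equations for the Riemann variables \eqref{Riev} suggest using the weighted quantities $c(u)^{\lambda/2}R$ and $c(u)^{\lambda/2}S$. The first step is therefore to recall the Riemann-variable system for \eqref{req}, in the form used in the proof of Theorem~\ref{mainthm}:
\[
R_t - cR_x = \tfrac{\lambda c'}{4c}(R^2-RS)+\dots,\qquad S_t + cS_x = \tfrac{\lambda c'}{4c}(S^2-RS)+\dots .
\]
From this I will extract the structural estimate that makes the integral bound in Theorem~\ref{mainthm} hold. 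The $L^{2/\lambda}$ bound stated there is what one obtains from the conservation-type identity
\[
\partial_t\bigl(c^{?}|R|^{2/\lambda}\bigr) - \partial_x\bigl(c^{?+1}|R|^{2/\lambda}\bigr) = (\text{terms involving } S),
\]
together with its analogue for $S$. I expect the paper to have shown that these combine into a bounded quantity. So the second step takes from the proof of Theorem~\ref{mainthm} the estimate
\[
\int_{\mathbb R}\bigl(|u_t|^{2/\lambda}+|u_x|^{2/\lambda}\bigr)(t,x)\,dx \le C\delta^{2/\lambda}\varepsilon^{1-2/\lambda},
\]
which is valid uniformly for $t<T^*$ and is part of the statement of Theorem~\ref{mainthm}. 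The bound $c_0/2\le c\le 2c_0$ is also available from there.

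The third step is the spatial Hölder estimate, obtained by Hölder's inequality with exponent $p=2/\lambda>1$ (recall $1<\lambda<2$). Let $p'$ be the conjugate exponent, so that $1/p'=1-\lambda/2$. Then
\[
|u(t,x)-u(t,y)|\le \int_y^x|u_x|\,dz\le |x-y|^{1-\lambda/2}\Bigl(\int|u_x|^{2/\lambda}\Bigr)^{\lambda/2}\le C\delta\,\varepsilon^{\lambda/2-1}|x-y|^{1-\lambda/2}.
\]
The scaling matches the statement exactly: $(\delta^{2/\lambda}\varepsilon^{1-2/\lambda})^{\lambda/2}=\delta\varepsilon^{\lambda/2-1}$. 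The same argument applied with $u_t$ in place of $u_x$ does not directly give time regularity at a fixed point, because the $L^{2/\lambda}$ bound is in $x$, not in $t$.

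For the time difference I will therefore use a characteristic-triangle trick. Given $s<t$ and a point $x$, set $h=t-s$. The backward characteristics from $(t,x)$ reach time $s$ at points $x_\pm$ with $|x_\pm - x|\le 2c_0h$, using the speed bound $c\le 2c_0$. Along the backward $\pm$-characteristic, $\frac{d}{d\tau}u=u_t\pm c u_x$, which equals $R$ or $S$. So I write
\[
u(t,x)-u(s,x)=\bigl[u(t,x)-u(s,x_+)\bigr]+\bigl[u(s,x_+)-u(s,x)\bigr].
\]
The second bracket is handled by the spatial estimate, giving $C\delta\varepsilon^{\lambda/2-1}(2c_0h)^{1-\lambda/2}$. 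The first bracket is $\int_s^t S$ along a characteristic, and this is the main obstacle, since it requires an $L^{2/\lambda}$ bound of $S$ along characteristics rather than on time slices. I would first try a flux argument. Integrate the conservation identity for $|S|^{2/\lambda}$ (with its $c$-weight) over the region bounded by the $+$ and $-$ characteristics through $(t,x)$ and the slice $\{\tau=s\}$. The divergence theorem then controls $\int |S|^{2/\lambda}\,d\tau$ along the $S$-characteristic of opposite family by the slice integral. The flux across an $R$-family characteristic of the $S$-density carries the weight $(c+c)$, which is nondegenerate, so the flux is coercive. I also need the source terms to have a good sign, or to be integrable by the same bookkeeping used for Theorem~\ref{mainthm}. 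With that bound, Hölder's inequality in $\tau$ gives $\int_s^t|S|\,d\tau\le h^{1-\lambda/2}(C\delta^{2/\lambda}\varepsilon^{1-2/\lambda})^{\lambda/2}$, as required.

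Finally, the existence of the limit $u(T^*,x)$ follows from the time Hölder estimate. It shows that $u(t,x)$ is Cauchy as $t\to T^*$, and the convergence is uniform in $x$. Passing to the limit in the spatial estimate then yields the last inequality of the proposition.
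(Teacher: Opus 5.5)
Your proof is correct in substance once two slips are fixed, but for the time direction it takes a different route from the paper. The spatial estimate and the passage to $u(T^*,x)$ are exactly as in the paper.

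\textbf{The paper's route.} For the time difference, the paper stays on the vertical line through $x$. It subtracts the $|R|^p$ identity from the $|S|^p$ identity to obtain \eqref{lp-time-identity}, and integrates it over the half-strip $[t_1,t_2]\times(-\infty,x]$. The line $\{x\}\times[t_1,t_2]$ is non-characteristic, so the flux $c(u)(|R|^p+|S|^p)$ through it is coercive. The source is handled by $A\le K_2E_0P^\theta$, $E\le 2E_0$ and \eqref{P-gamma-int}. This gives $\int_{t_1}^{t_2}(|R|^p+|S|^p)(\tau,x)\,d\tau\le CE_0$, hence an $L^p$-in-time bound for $u_t=(R+S)/2$ at fixed $x$, and H\"older in $\tau$ finishes.

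\textbf{Your route.} You instead follow a characteristic back to time $s$ and add a horizontal step. Two slips need fixing.

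First, along the $+$ characteristic one has $\frac{d}{d\tau}u=R$, not $S$. To get $\int_s^t S$ you must use the minus characteristic $x_-(\cdot)$ through $(t,x)$. Its foot still satisfies $|x_-(s)-x|\le 2c_0(t-s)$, so the horizontal step works unchanged.

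Second, the ``good sign'' option is not available. The source in the $|S|^p$ identity equals $(\frac12-\frac\lambda4)\frac{c'(u)}{c(u)}R|S|^{p-2}S(S-R)$, which is positive where $S<0$ and $r>|S|$. The bookkeeping option does work, and you need not redo it. The bound you call the main obstacle is exactly Proposition \ref{prop:char_improve}, the improved form of \eqref{bt-char}. By the continuity argument it holds on all of $[0,T^*)$ and gives $\int_s^t|S(\tau,x_-(\tau))|^p\,d\tau\le\frac{4}{c_0}E_0$. Alternatively, the $+$ characteristic works with $R$ and estimate (3.15) in the Remark after that proposition.

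\textbf{What each route buys.} Yours recycles an estimate already proved inside the bootstrap, so no new identity or integration is needed. The price is the characteristic geometry and an extra application of the spatial estimate. The paper's route is self-contained given the $E$, $A$, $P$ bounds. It also yields a somewhat stronger by-product: $R$ and $S$ are uniformly in $L^p$ in time along every vertical line.
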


\begin{remark}
It should be noted that the assumptions on the initial data in the present paper
are slightly different from those in Glassey, Hunter and Zheng \cite{GHZ} and
the author's previous work \cite{s6}. In particular, the additional condition
\[
  \sup_{x\in\mathbb R}(-\phi'(x))
  >
  \sup_{x\in\mathbb R}\phi'(x)
\]
was not imposed in those works. Moreover, the previous works used a
one-parameter family of initial data, which corresponds to the special case
\(\delta=\varepsilon\) in the present setting. Introducing two independent
parameters allows us to describe more precisely the dependence of the blow-up
time on the size and the width of the initial data. This is one advantage of the present formulation.
 \end{remark}

\subsection{Difficulty and the idea of proof}
From the equation in \eqref{req}, the variables $R$ and $S$, defined in \eqref{Riev},  satisfy the following:
\begin{eqnarray}\label{fsin}
 \left\{  
\begin{array}{ll}
\partial_t R -c(u)\partial_x R=\dfrac{c'(u)}{4c(u)}( \lambda R^2 + 2(1-\lambda) RS -(2-\lambda )S^2 ), \\
\partial_t S +c(u)\partial_x S =\dfrac{c'(u)}{4c(u)}( \lambda S^2 + 2(1-\lambda) RS -(2-\lambda )R^2 ).
\end{array}
\right.
\end{eqnarray} 
In our proof, the Riemann variable \(S\) is the component which is expected to
blow up in finite time. The initial data are chosen so that
\[
  R(0,x)=0,
\]
and so that the size of \(R\) can be controlled in comparison with that of
\(S\).

There is an essential difference between the cases \(0<\lambda\le1\) and
\(1<\lambda<2\). When \(0<\lambda\le1\), the \(L^{2/\lambda}\)-estimate for
\(R\) and \(S\) is strong enough to control the integral of \(S^2\) along the
characteristic direction of \(R\), since \(2/\lambda\ge2\). In particular, in
Glassey, Hunter and Zheng \cite{GHZ} and the author's previous work
\cite{s6}, this estimate yields a bound for \(\|R(t)\|_{L^\infty}\). As a
result, the negative term
\(
  -(2-\lambda)R^2
\)
in the equation for \(S\), which may prevent the growth of \(S\), can be
regarded as a lower-order contribution, and the blow-up of \(S\) follows from
a Riccati-type argument.

On the other hand, when \(1<\lambda<2\), we have
\(
  \frac{2}{\lambda}<2.
\)
Thus the \(L^{2/\lambda}\)-estimate for \(R\) and \(S\) is no longer sufficient
to control the integral of \(S^2\) along the characteristic direction of \(R\).
Consequently, it is not clear a priori whether \(R\) remains small compared
with \(S\); indeed, both Riemann variables may grow simultaneously. To overcome
this difficulty, we introduce the supremum functions
\[
  P(t)=\sup_{x\in\mathbb R}S(t,x),
  \qquad
  A(t)=\sup_{x\in\mathbb R}(-R(t,x)),
\]
and first prove that
\[
  A(t)\le P(t)
\]
by a comparison argument based on right Dini derivatives. We then improve this
rough estimate to
\[
  A(t)\le C E_0 P(t)^{2-2/\lambda}.
\]
This refined estimate is sufficient to control \(u\) and hence \(c'(u)/c(u)\),
while still allowing \(A(t)\) to grow as \(P(t)\) grows. Combining these
estimates, we derive a Riccati-type differential inequality for \(P(t)\), which
leads to the finite-time blow-up.

For comparison, in the case \(\lambda=2\), the term \(-(2-\lambda)R^2\) does
not appear in the equation for \(S\). This structural simplification is one of
the reasons why rather precise blow-up criteria are available for the
\(p\)-system. In that case, time-dependent estimates for quantities related to
\(u\), \(c(u)\), and \(c'(u)\) play an important role; see, for example,
\cite{G,GC1,GC2,GPZ,s3,s4}.

Such extremal functions have already been used in the analysis of shallow
water equations to prove wave breaking; see, for example,
\cite{ACJE19982,ACJE1998}. However, to the best of the author's knowledge,
the present paper is the first to derive singularity formation from the
relation between the supremum functions of two unknown variables.

This method is expected to provide a new approach to proving finite-time
blow-up for other equations in which a certain term may suppress the growth
of the component responsible for blow-up.

The remainder of the present paper is organized as follows. In Section 2, we
recall some formulas on characteristic curves and basic properties of the
Riemann variables. We also introduce the supremum functions and the Dini
derivatives used in the proof. In Sections 3 and 4, we prove Theorem
\ref{mainthm} and Proposition \ref{prop:holder}, respectively.

\noindent
{\bf Notation}

We denote  the Lebesgue space for $1\leq p\leq \infty$ and the $L^2$-based Sobolev space of order $m \in \N$ on $\R$ by $L^p (\R)$  and $H^m (\R)$.
For a Banach space $X$, $C^j ([0,T];X)$  denotes the set of functions $f:[0,T] \rightarrow X$ such that $f(t)$ and  its time derivatives up to $j$ are continuous. 
Various positive constants are simply denoted by $C$.

\section{Preliminary}

\subsection{Riemann variables}
We set $R(t,x)$ and $S(t,x)$ as follows.
\begin{eqnarray}\label{ri}\left\{
\begin{array}{ll}
R =\partial_t u +c(u) \partial_x u, \\
S =\partial_t u -c(u) \partial_x u.
\end{array}\right.
\end{eqnarray}
The functions $R$ and $S$ have been used in Glassey, Hunter and Zheng \cite{GHZ, GHZ2} and Zhang and Zheng \cite{zz1}.
We recall some properties of $R$ and $S$ proven in \cite{s1}.  

By \eqref{req}, $R$ and $S$ are  solutions to the following system of first-order equations:
\begin{eqnarray}\label{fs}
 \left\{  
\begin{array}{ll}
\partial_- R =\dfrac{c'(u)}{2c(u)}(R S-S ^2) 
+\lambda  \dfrac{c'(u)}{4c(u)}(R -S )^2 , \\
\partial_x u = \dfrac{1}{2c(u)}(R - S),\\
\partial_+ S  =\dfrac{c'(u)}{2c(u)}(S R-R ^2)
+\lambda  \dfrac{c'(u)}{4c(u)}(S -R )^2 ,
\end{array}
\right.
\end{eqnarray} 
where we define the differential operators $\partial_{\pm}$ by
\[
\partial_{\pm}= \partial_t  \pm c(u)\partial_x .
\]
Let \(x_-(t)\) and \(x_+(t)\) be characteristic curves for the first and the
third equations in \eqref{fs}, respectively.
 In other words, $x_{\pm} (t)$ are solutions to the following differential equations:
 \begin{eqnarray}\label{cc}
\dfrac{d}{dt} x_{\pm} (t)=\pm c(u(t,x_{\pm} (t))).
\end{eqnarray}
We also define the characteristic curves with \(x\) as the independent variable
by
 \begin{eqnarray}\label{ccx}
\dfrac{d}{dx} t_{\pm} (x)=\pm \dfrac{1}{c(u(t_{\pm} (x),x))}.
\end{eqnarray}
Based on their definitions, $R$ and $S$ can be expressed on the characteristic curves $x_{\pm} (t)$ by
\begin{align*}
R(t,x_{+} (t)) = \dfrac{d}{dt} u(t,x_{+} (t)), \\
S(t,x_{-} (t)) = \dfrac{d}{dt} u(t,x_{-} (t)). 
\end{align*}
Next we rewrite  \eqref{fs} on the characteristic curves using this equality. 
Multiplying the first equation in \eqref{fs} by $c^{(\lambda-1)/2}=(c(u))^{(\lambda-1)/2}$ and using the method of characteristics,  we have that the first equation of \eqref{fs} is reduced to
\begin{eqnarray} \label{ch-req}
\dfrac{d}{dt} \left(   c^{(\lambda-1)/2} R (t,x_{-} (t)) \right) =  \dfrac{c' c^{(\lambda-3)/2}}{4} \left(  \lambda R^2  -  (2 - \lambda) S^2 \right) .
\end{eqnarray}
Similarly, it holds for $S$ that
\begin{eqnarray} \label{ch-seq}
\dfrac{d}{dt} \left(   c^{(\lambda-1)/2} S (t,x_{+} (t)) \right) =  \dfrac{c' c^{(\lambda-3)/2}}{4} \left(  \lambda S^2  -  (2 - \lambda) R^2 \right) .
\end{eqnarray}
For $\lambda \in [0,2]$, the equations \eqref{ch-req} and \eqref{ch-seq} immediately imply the following Lemma.
 \begin{lemma}\label{zz1} 
Let $0 \leq \lambda \leq 2$, $c(u)>0$ and  $c'(u) \geq 0$  for $C^1$-solution $u$   of \eqref{req} on $[0, T^*)$. Suppose that $R(0,x) \leq 0$ ($S(0,x) \leq 0$ ). Then  it holds that
\begin{eqnarray}\label{lemmazzes1}
R(t,x) \leq 0 \ (S(t,x) \leq 0 \ \mbox{resp.}) \ \mbox{for} \ (t,x) \in [0,T ^* ) \times \R,
\end{eqnarray}
where $R$ and $S$ are the functions in $(\ref{ri})$.
\end{lemma}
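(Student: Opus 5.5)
The plan is to read the sign of the right-hand side of \eqref{ch-req} (resp.\ \eqref{ch-seq}) along the backward characteristic, and then run a short contradiction argument. I treat $R$; the claim for $S$ follows by exchanging the roles of $R,S$ and of $x_-,x_+$.

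First I multiply \eqref{fs} by $c^{(\lambda-1)/2}$ and rewrite the result using $\lambda RS - \lambda S^2$ plus the square term. This shows that along $x_-(t)$ the quantity $w(t):=c^{(\lambda-1)/2}R(t,x_-(t))$ satisfies a linear-in-$R$ equation with a nonpositive forcing:
\[
\frac{d}{dt}w
= \frac{c'c^{-1}}{4}\,\lambda R\,w + \frac{c'c^{(\lambda-3)/2}}{4}\bigl(-(2-\lambda)S^2\bigr) + (\text{terms proportional to } R).
\]
More concretely, I use \eqref{ch-req} in the form
\[
\frac{d}{dt}w=a(t)\,w-b(t),\qquad
a(t)=\frac{\lambda c'(u)\,R}{4c(u)}\Big|_{(t,x_-(t))},\qquad
b(t)=\frac{(2-\lambda)c'c^{(\lambda-3)/2}}{4}S^2\ge 0,
\]
where $b\ge0$ follows from $0\le\lambda\le2$, $c>0$ and $c'\ge0$. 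Here $\lambda c'c^{(\lambda-3)/2}R^2/4$ has been written as $a\,w$, with $a$ continuous and locally bounded since $u\in C^1$.

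The key step is an integrating factor. Variation of constants gives
\[
w(t)=e^{\int_0^t a}\,w(0)-\int_0^t e^{\int_s^t a}\,b(s)\,ds .
\]
Both terms are $\le0$ whenever $w(0)\le0$, and $w(0)\le 0$ holds because $R(0,x)\le0$ and $c>0$. Since every point $(t,x)$ with $t<T^*$ lies on a backward characteristic $x_-$ from some point at $t=0$, I conclude $R(t,x)\le0$ everywhere.

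The only delicate points are technical. I need the characteristics \eqref{cc} to exist and be unique backward to $t=0$, which holds because $c(u)$ is $C^1$ and bounded on compact time intervals of $[0,T^*)$. I also need $a$ to be integrable along them, which holds because $R$ is continuous. I don't expect any real obstacle beyond justifying that the nonlinear $R^2$ term can be absorbed into the linear coefficient $a$; this avoids appealing to a comparison principle directly.
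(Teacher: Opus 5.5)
Your proof is correct and follows the paper's route. The paper only remarks that the lemma follows immediately from \eqref{ch-req} and \eqref{ch-seq}; your step of absorbing the wrong-signed term $\lambda R^2$ into the linear coefficient $a=\lambda c'(u)R/(4c(u))$ and then applying variation of constants, so that only the nonpositive forcing $-(2-\lambda)S^2$ remains, is exactly what makes that ``immediately'' rigorous (drop the stray ``terms proportional to $R$'' in your first display, since after multiplication by $c^{(\lambda-1)/2}$ the $RS$ terms cancel exactly).
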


\subsection{Dini derivatives}
We introduce some basic properties of the Dini derivative.

For a continuous function \(f=f(t)\), we define the right upper Dini derivative by
\[
  D_+ f(t)
  =
  \limsup_{h\downarrow 0}
  \frac{f(t+h)-f(t)}{h}.
\]
We also define the right lower Dini derivative by
\[
  D_- f(t)
  =
  \liminf_{h\downarrow 0}
  \frac{f(t+h)-f(t)}{h}.
\]
Then it holds that
\[
  D_+(f-g)(t)
  \le
  D_+f(t)-D_-g(t)
\]
whenever the right-hand side is meaningful.

The Lipschitz continuity of the following supremum function is proved in almost
the same way as in Constantin and Escher \cite{ACJE1998}. For completeness of the present paper, we give the proof.

\begin{lemma}\label{lem:sup_lipschitz}
Let \(w\in C^1([0,T]\times\mathbb R)\). Suppose that
\[
  \sup_{(t,x)\in[0,T]\times\mathbb R}|w(t,x)|+ \sup_{(t,x)\in[0,T]\times\mathbb R}|w_t(t,x)|<\infty
\]
and that
\[
  W(t)=\sup_{x\in\mathbb R}w(t,x)
\]
is finite for all \(t\in[0,T]\). Then \(W\) is Lipschitz continuous on \([0,T]\).
In particular, \(W\) is differentiable for almost every \(t\in[0,T]\).
\end{lemma}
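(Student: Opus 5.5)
The approach is to bound the supremum at one time from below by the value of $w$ at a nearly maximizing point taken from the other time. Set
\[
M=\sup_{[0,T]\times\mathbb R}|w_t|<\infty,
\]
and fix $t,s\in[0,T]$. For every $x\in\mathbb R$, the mean value theorem in the time variable (with $x$ frozen) gives
\[
w(t,x)\le w(s,x)+M|t-s|\le W(s)+M|t-s|.
\]
Here we use only that $w(\cdot,x)$ is $C^1$ on $[0,T]$ and that $|w_t|\le M$ uniformly in $x$. Taking the supremum over $x$ on the left yields $W(t)\le W(s)+M|t-s|$.

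The same argument with $t$ and $s$ interchanged gives $W(s)\le W(t)+M|t-s|$. Combining the two inequalities,
\[
|W(t)-W(s)|\le M|t-s|,
\]
so $W$ is Lipschitz on $[0,T]$ with constant $M$. Finiteness of $W(t)$ for each $t$ is needed so that the subtraction is meaningful. The boundedness of $w$ itself already ensures this, and the lemma assumes it explicitly anyway. The supremum does not have to be attained: we never pick a maximizer, we only bound $w(t,x)$ pointwise and then pass to the supremum, so a possibly non-compact maximizing sequence causes no difficulty.

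The differentiability almost everywhere then follows from Rademacher's theorem in one dimension, or equivalently from Lebesgue's theorem. A Lipschitz function is absolutely continuous, hence of bounded variation, hence differentiable a.e. There is no real obstacle. The only point needing care is that the pointwise bound holds uniformly in $x$, and this is exactly what the uniform bound on $w_t$ provides. This is also why the argument follows Constantin and Escher \cite{ACJE1998}, with no need to locate maximizers, which would require decay at spatial infinity.
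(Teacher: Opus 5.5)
Your proof is correct and is essentially the paper's argument: the pointwise bound $w(t,x)\le w(s,x)+M|t-s|$ (you use the mean value theorem, the paper the fundamental theorem of calculus), then the supremum over $x$, then interchanging $t$ and $s$. As in the paper, a.e. differentiability follows from Rademacher's theorem.
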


\begin{proof}
Set
\[
  M=\sup_{(t,x)\in[0,T]\times\mathbb R}|w_t(t,x)|.
\]
For \(s,t\in[0,T]\) and \(x\in\mathbb R\), we have
\[
  w(t,x)-w(s,x)
  =
  \int_s^t w_\tau(\tau,x)\,d\tau.
\]
Hence
\[
  |w(t,x)-w(s,x)|
  \le
  M|t-s|.
\]
Therefore
\[
  w(t,x)\le w(s,x)+M|t-s|.
\]
Taking the supremum with respect to \(x\in\mathbb R\), we obtain
\[
  W(t)\le W(s)+M|t-s|.
\]
Interchanging \(s\) and \(t\), we also obtain
\[
  W(s)\le W(t)+M|t-s|.
\]
Thus
\[
  |W(t)-W(s)|\le M|t-s|.
\]
Therefore \(W\) is Lipschitz continuous on \([0,T]\).

By Rademacher's theorem, \(W\) is differentiable for almost every \(t\in[0,T]\).
\end{proof}

We recall some basic properties of the Dini derivative. Some of the proofs are
based on the textbook of  Lakshmikantham and  Leela
\cite{Lakshmikantham1969}.

\begin{lemma}\label{lem:dini_sup}
Let \(w\in C^1([0,T]\times\mathbb R)\) and suppose that
\[
  w_t+b(t,x)w_x=F(t,x)
\]
on \([0,T]\times\mathbb R\), where \(b\) and \(F\) are continuous.
Assume that there exists a compact set \(K\subset\mathbb R\) such that
\[
  \operatorname{supp} w(t,\cdot)\subset K
\]
for all \(t\in[0,T]\). Let
\[
  W(t)=\sup_{x\in\mathbb R}w(t,x)
\]
and
\[
  \mathcal M_w(t)
  =
  \{x\in\mathbb R\mid w(t,x)=W(t)\}.
\]
Then, for \(t\in[0,T)\),
\[
  D_+W(t)
  \le
  \sup_{x\in\mathcal M_w(t)}F(t,x)
\]
and
\[
  D_-W(t)
  \ge
  \inf_{x\in\mathcal M_w(t)}F(t,x).
\]
\end{lemma}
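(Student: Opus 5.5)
We will prove both inequalities by a compactness argument: take a sequence $h_n\downarrow 0$ realizing the $\limsup$ (resp.\ $\liminf$), pick near-maximizers at time $t+h_n$, extract a convergent subsequence, and Taylor expand $w$ at the limit point along the direction $(1,b)$. The key observation is that at any maximizer $x_0\in\mathcal M_w(t)$ we have $w_x(t,x_0)=0$, since the supremum is attained in the interior. Indeed, $w(t,\cdot)$ has compact support in $K$, so $W(t)\ge 0$. If $W(t)>0$, the maximum is attained inside $K$. If $W(t)=0$, every point outside $K$ is a maximizer, and there $w_x=0$ as well. Hence on $\mathcal M_w(t)$ the equation reduces to $w_t(t,x_0)=F(t,x_0)$.

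\textbf{Upper bound for $D_+W$.} Choose $h_n\downarrow0$ with
\[
\frac{W(t+h_n)-W(t)}{h_n}\to D_+W(t).
\]
The supremum $W(t+h_n)$ is attained at some $x_n$: either $x_n\in K$ when $W(t+h_n)>0$, or any point outside $K$ when $W(t+h_n)=0$. In the second case we take $x_n$ to be a fixed point in the complement of $K$, and then $w_x=0$ near $x_n$ for all times. Passing to a subsequence, $x_n\to x_0$, and by continuity of $W$ (Lemma \ref{lem:sup_lipschitz}) we get $x_0\in\mathcal M_w(t)$. Then
\[
W(t+h_n)-W(t)\le w(t+h_n,x_n)-w(t,x_n)=h_n\,w_t(t+\theta_n h_n,x_n),
\]
using $W(t)\ge w(t,x_n)$. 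Dividing by $h_n$ and letting $n\to\infty$, uniform continuity of $w_t$ on compact sets gives $D_+W(t)\le w_t(t,x_0)=F(t,x_0)$. Note that this step only uses $w_t$, not $b$, so $b$ enters only through the identity $w_t=F$ on the maximizer set. This yields $D_+W(t)\le\sup_{\mathcal M_w(t)}F$.

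\textbf{Lower bound for $D_-W$.} Here we proceed directly. For any $x_0\in\mathcal M_w(t)$,
\[
W(t+h)-W(t)\ge w(t+h,x_0)-w(t,x_0)=h\,w_t(t,x_0)+o(h).
\]
Hence $D_-W(t)\ge w_t(t,x_0)=F(t,x_0)\ge\inf_{\mathcal M_w(t)}F$.

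The main obstacle is the upper bound. It requires the maximizers at later times to accumulate at a maximizer at time $t$, and this needs the compact support to keep them in a compact set. The degenerate case $W=0$, where the maximizers are not confined to $K$, has to be handled separately as above.
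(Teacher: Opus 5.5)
Your proof is correct. The bound on $D_+W$ is the paper's argument essentially verbatim: maximizers $x_n$ of $w(t+h_n,\cdot)$, a limit point $x_0\in\mathcal M_w(t)$, the inequality $W(t)\ge w(t,x_n)$, the mean value theorem in $t$, and finally $w_x(t,x_0)=0$.

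The bound on $D_-W$ is where you differ. The paper follows the characteristic $X'(s)=b(s,X(s))$, $X(t)=x_0$. It uses $W(t+h)\ge w(t+h,X(t+h))$ together with $\frac{d}{ds}w(s,X(s))=F(s,X(s))$, and averages $F$ along the curve. You instead freeze the spatial point and use the first-order condition $w_x(t,x_0)=0$ to identify $w_t(t,x_0)$ with $F(t,x_0)$.

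Your route is more elementary. It uses the equation only at the single point $(t,x_0)$ and avoids solving an ODE whose coefficient $b$ is merely continuous; the paper tacitly needs Peano's theorem there. The paper's route uses only the derivative of $w$ along characteristics, so that half of its argument would also survive for solutions differentiable only along characteristics. Under the $C^1$ hypothesis of the lemma, both routes are equally valid.

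Your case split on $W(t)>0$ versus $W(t)=0$ is unnecessary. Fermat's theorem on the open set $\mathbb R$ gives $w_x(t,x_0)=0$ at every maximizer. This includes maximizers lying in $K$ when $W(t)=0$, which your split does not literally address. Also, since $w(t,\cdot)$ vanishes on $\partial K$, a maximizer can always be chosen in $K$, as the paper does, so the fixed point outside $K$ in your upper-bound step is harmless but not needed.
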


\begin{proof}
Since \(\operatorname{supp} w(t,\cdot)\subset K\) for all \(t\in[0,T]\), the function \(W\) is finite and the supremum is attained. We first prove the estimate for \(D_+W(t)\).

Fix \(t\in[0,T)\). Let \(h_n\downarrow0\) be a sequence such that
\[
  D_+W(t)
  =
  \lim_{n\to\infty}
  \frac{W(t+h_n)-W(t)}{h_n}.
\]
Since \(\operatorname{supp}w(t,\cdot)\subset K\) for all \(t\in[0,T]\), we may choose
\(x_n\in K\) such that
\[
  W(t+h_n)=w(t+h_n,x_n).
\]
Taking a subsequence if necessary, we may assume that
\[
  x_n\to x_0\in K.
\]
By Lemma \ref{lem:sup_lipschitz}, \(W\) is continuous. Hence
\[
  W(t+h_n)\to W(t).
\]
On the other hand, since \(W(t+h_n)=w(t+h_n,x_n)\), \(h_n\downarrow0\),
and \(x_n\to x_0\), the continuity of \(w\) gives
\[
  W(t)
  =\lim_{n\to\infty}W(t+h_n)
  =\lim_{n\to\infty}w(t+h_n,x_n)
  =w(t,x_0).
\]
Thus \(x_0\in\mathcal M_w(t)\).

Since
\[
  w(t,x_n)\le W(t),
\]
we obtain
\[
  \frac{W(t+h_n)-W(t)}{h_n}
  \le
  \frac{w(t+h_n,x_n)-w(t,x_n)}{h_n}.
\]
Moreover,
\[
  \frac{w(t+h_n,x_n)-w(t,x_n)}{h_n}
  =
  \int_0^1 w_t(t+\theta h_n,x_n)\,d\theta.
\]
Since \(w_t\) is continuous on the compact set \([0,T]\times K\), it follows that
\[
  \int_0^1 w_t(t+\theta h_n,x_n)\,d\theta
  \to
  w_t(t,x_0).
\]
Therefore
\[
  D_+W(t)\le w_t(t,x_0).
\]
By the equation for \(w\),
\[
  w_t(t,x_0)=F(t,x_0)-b(t,x_0)w_x(t,x_0).
\]
Since \(x_0\) is a maximum point of \(w(t,\cdot)\), it follows that
\[
  w_x(t,x_0)=0.
\]
Therefore
\[
  D_+W(t)
  \le
  F(t,x_0)
  \le
  \sup_{x\in\mathcal M_w(t)}F(t,x).
\]

Next we prove the estimate for \(D_-W(t)\). Let \(x_0\in\mathcal M_w(t)\). Let \(X(s)\) be the solution to
\[
  \frac{d}{ds}X(s)=b(s,X(s)),
  \qquad
  X(t)=x_0.
\]
Then
\[
  W(t+h)\ge w(t+h,X(t+h)).
\]
Since \(W(t)=w(t,x_0)\), we have
\[
  \frac{W(t+h)-W(t)}{h}
  \ge
  \frac{w(t+h,X(t+h))-w(t,x_0)}{h}.
\]
Moreover,
\[
  \frac{d}{ds}w(s,X(s))=F(s,X(s)).
\]
Hence
\[
  \frac{w(t+h,X(t+h))-w(t,x_0)}{h}
  =
  \frac1h\int_t^{t+h}F(s,X(s))\,ds.
\]
Letting \(h\downarrow0\), we obtain
\[
  D_-W(t)\ge F(t,x_0).
\]
Since \(x_0\in\mathcal M_w(t)\) is arbitrary, this implies in particular
\[
  D_-W(t)
  \ge
  \inf_{x\in\mathcal M_w(t)}F(t,x).
\]
The proof is complete.
\end{proof}

\section{Proof of Theorem \ref{mainthm}}

\subsection{Bootstrap argument}
We shall use the standard continuation criterion for classical solutions:
as long as \(u\) remains in a compact subset of an interval on which \(c\)
is smooth and positive, and
\[
  \|u_t(t)\|_{L^\infty}+\|u_x(t)\|_{L^\infty}
\]
remains bounded, the classical solution can be continued.

Let
\[
  1<\lambda<2,
  \qquad
  p=\frac{2}{\lambda},
  \qquad
  \theta=2-p.
\]
Then
\[
  1<p<2,
  \qquad
  0<\theta<1.
\]
We set
\[
  r=-R.
\]
On any time interval on which \(c'(u)\ge0\), Lemma 2.1 gives
\(R(t,x)\le0\). In particular, this holds on the bootstrap interval
because of (3.3).

We introduce
\[
  P(t)=\sup_{x\in\mathbb R}S(t,x),
  \qquad
  A(t)=\sup_{x\in\mathbb R}r(t,x),
  \qquad
  L(t)=\sup_{x\in\mathbb R}(-S(t,x)).
\]
We also set
\[
  E(t)
  =
  \int_{\mathbb R}
  \left(
    |R(t,x)|^p+|S(t,x)|^p
  \right)\,dx,
  \qquad
  E_0=E(0).
\]
For the initial data in Theorem \ref{mainthm}, we have
\[
  R(0,x)=0
\]
and
\[
  S(0,x)
  =
  -2c(\delta\phi(x/\varepsilon))
  \frac{\delta}{\varepsilon}
  \phi'(x/\varepsilon).
\]
Therefore
\[
  E_0
  =
  \int_{\mathbb R}|S(0,x)|^p\,dx
  \le
  C\left(\frac{\delta}{\varepsilon}\right)^p\varepsilon .
\]
Moreover, by the assumption on \(\phi\), if \(\delta>0\) is sufficiently small, then
\[
  P(0)>0,
  \qquad
  A(0)=0<P(0),
  \qquad
  L(0)<P(0).
\]

Let \(K_1,K_2\) be positive constants which will be fixed later.
We consider a time interval \([0,T]\subset[0,T^*)\) on which the following estimates hold.

First, we assume
\begin{equation}\label{bt-u}
  \sup_{0\le t\le T}\|u(t)\|_{L^\infty}
  \le
  K_1\delta .
\end{equation}
Choosing \(\delta>0\) sufficiently small, \eqref{bt-u} implies that
\begin{equation}\label{bt-c}
  \frac{c_0}{2}
  \le
  c(u(t,x))
  \le
  2c_0
\end{equation}
and
\begin{equation}\label{bt-a}
  0<a_*
  \le
  \frac{c'(u(t,x))}{4c(u(t,x))}
  \le
  a^*
\end{equation}
for \((t,x)\in[0,T]\times\mathbb R\), where \(a_*\) and \(a^*\) are positive constants independent of \(\delta\) and \(\varepsilon\).

Second, we assume the \(L^p\)-estimate  
\begin{equation}\label{bt-E}
  E(t)\le 4E_0
\end{equation}
for \(0\le t\le T\).

Third, for every minus characteristic curve \(x_-(s)\), we assume that 
\begin{equation}\label{bt-char}
  \int_0^t |S(s,x_-(s))|^p\,ds
  \le
  \cfrac{8}{c_0} E_0
\end{equation}
for \(0\le t\le T\).

Finally, we assume the refined estimate for \(r=-R\)
\begin{equation}\label{bt-A}
  A(t)\le K_2 E_0P(t)^\theta
\end{equation}
for \(0\le t\le T\).

Under the assumption \eqref{bt-u}, we shall first prove that
\begin{equation}\label{bt-cone-pre}
  A(t) \le P(t),
  \qquad
  L(t) \le P(t)
\end{equation}
for \(0\le t\le T\). The inequalities \eqref{bt-cone-pre} are not included in the bootstrap assumptions. They follow from the initial strict inequalities
\[
  A(0)<P(0),
  \qquad
  L(0)<P(0),
\]
and the positivity of \(c'(u)/c(u)\), which is guaranteed by \eqref{bt-u}. The estimate \eqref{bt-cone-pre} will be used to derive the Riccati-type lower estimate for \(P(t)\).

Our goal is to improve \eqref{bt-u}, \eqref{bt-E}, \eqref{bt-char}, and \eqref{bt-A}, provided \(\delta>0\) is sufficiently small. Since the improved estimates are independent of \(T<T^*\), the standard continuity argument shows that these estimates hold on every compact subinterval of \([0,T^*)\). The Riccati-type lower estimate for \(P(t)\) then implies that \(T^*\) is finite.

We first prove the following proposition. This proposition is not itself one of the bootstrap estimates, but an inequality between the supremum functions
which follows under the bootstrap assumptions.

\begin{proposition}\label{prop:riccati_P}
For the solution corresponding to the initial data of Theorem \ref{mainthm}, assume that \eqref{bt-c} and \eqref{bt-a} hold on \([0,T]\). Suppose that
\[
  A(0)<P(0),
  \qquad
  L(0)<P(0).
\]
Then
\begin{equation}\label{bt-cone}
  A(t)\le P(t),
  \qquad
  L(t)\le P(t)
\end{equation}
for \(0\le t\le T\). Moreover,
\begin{equation}\label{P-riccati-dini}
  D_-P(t)\ge a_*b_\lambda P(t)^2
\end{equation}
for \(0\le t<T\), where
\[
  b_\lambda=\min\{\lambda,4(\lambda-1)\}>0.
\]
Consequently, for \(0\le t \leq T\),
\begin{equation}\label{P-lower}
  P(t)\ge
  \frac{P(0)}{1-a_*b_\lambda P(0)t}
\end{equation}
as long as the right-hand side is finite. Moreover, for every \(0<\gamma<1\) and \(0\le t \leq T\),
\begin{equation}\label{P-gamma-int}
  \int_0^t P(s)^\gamma\,ds
  \le
  \frac{1}{a_*b_\lambda(1-\gamma)}
  P(0)^{\gamma-1}.
\end{equation}
\end{proposition}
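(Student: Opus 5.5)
The plan is to derive a differential inequality for each of the supremum functions $P$, $A$, $L$ using Lemma \ref{lem:dini_sup}, and then close a continuity argument for the cone \eqref{bt-cone}.

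\textbf{Setup.} The data have compact support and $c\le 2c_0$ by \eqref{bt-c}, so finite speed of propagation gives a fixed compact $K$ containing $\operatorname{supp}R(t,\cdot)$ and $\operatorname{supp}S(t,\cdot)$ for $t\in[0,T]$. Hence Lemma \ref{lem:sup_lipschitz} and Lemma \ref{lem:dini_sup} apply to $w=S$ with $b=c$, and to $w=r$ and $w=-S$ with $b=\mp c$. Lemma \ref{zz1} gives $r\ge0$. Write $a=c'(u)/(4c(u))\in[a_*,a^*]$. By \eqref{fsin}, with $R=-r$,
\[
\partial_+S=a\bigl(\lambda S^2+2(\lambda-1)rS-(2-\lambda)r^2\bigr),\qquad
\partial_-r=a\bigl(-\lambda r^2-2(\lambda-1)rS+(2-\lambda)S^2\bigr),
\]
and $\partial_+(-S)$ is obtained from the first identity by a sign change.

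\textbf{Key pointwise bounds at times where the cone holds.}

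At a maximum point of $S(t,\cdot)$ we have $S=P$ and $0\le r\le A\le P$. The map $r\mapsto \lambda P^2+2(\lambda-1)rP-(2-\lambda)r^2$ is concave. Its values at the endpoints are $\lambda P^2$ (at $r=0$) and $4(\lambda-1)P^2$ (at $r=P$), so it is at least $b_\lambda P^2$ on $[0,P]$. The second half of Lemma \ref{lem:dini_sup} then gives \eqref{P-riccati-dini}.

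At a maximum point of $r(t,\cdot)$ we have $r=A$ and $|S|\le\max(P,L)=P$. The forcing is at most
\[
a\,g(|S|),\qquad g(s)=-\lambda A^2+2(\lambda-1)As+(2-\lambda)s^2 .
\]
Since $g$ is convex, $g(s)\le g(P)$ for $0\le s\le P$. The key algebraic identity is
\[
g(P)=(P-A)(\lambda A+(2-\lambda)P).
\]
This gives $D_+A\le C\,|P-A|\,P$ whenever $A\le P$. The same computation for $-S$ at $S=-L$, using $0\le r\le A\le P$, gives $D_+L\le C\,|P-L|\,P$.

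\textbf{Continuity argument.} Let $t_1$ be the first time at which $A=P$ or $L=P$. Suppose $t_1\le T$; then $t_1>0$ by the strict initial inequalities. On $[t_1-\eta,t_1)$ the cone holds, so $h=A-P$ satisfies
\[
D_+h\le D_+A-D_-P\le C|h|P-a_*b_\lambda P^2 .
\]
Since $P\ge P(0)>0$ and $h\to0$ as $t\uparrow t_1$, the right-hand side is negative on a short interval before $t_1$. A continuous function with negative right upper Dini derivative is nonincreasing, so $h(t_1)\le h(t_1-\eta)<0$, a contradiction. The same argument works for $L-P$. This proves \eqref{bt-cone} on $[0,T]$, and therefore \eqref{P-riccati-dini} on $[0,T)$.

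The monotonicity fact for Dini derivatives, and the compactness and attainment issues, are the technical points I expect to need the most care. The main idea, however, is the factorization of $g(P)$, which makes the touching point degenerate (forcing zero) while $P$ grows strictly.

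\textbf{Riccati consequences.} Because $P\ge P(0)>0$ is Lipschitz, $Q=1/P$ satisfies $D_+Q\le -a_*b_\lambda$. Comparison, again via monotonicity of $Q+a_*b_\lambda t$, gives $1/P(t)\le 1/P(0)-a_*b_\lambda t$, which is \eqref{P-lower}. It also forces $T<1/(a_*b_\lambda P(0))$. Finally,
\[
\int_0^tP^\gamma\,ds\le\int_0^{t}\bigl(P(0)^{-1}-a_*b_\lambda s\bigr)^{-\gamma}ds
\le\frac{P(0)^{\gamma-1}}{a_*b_\lambda(1-\gamma)},
\]
which is \eqref{P-gamma-int}.
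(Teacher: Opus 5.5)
You establish \eqref{bt-cone}, \eqref{P-riccati-dini} and \eqref{P-lower} correctly, but your proof of \eqref{P-gamma-int} fails. The first inequality in your last display,
\[
\int_0^tP(s)^\gamma\,ds\le\int_0^{t}\bigl(P(0)^{-1}-a_*b_\lambda s\bigr)^{-\gamma}ds,
\]
would need $P(s)\le\bigl(P(0)^{-1}-a_*b_\lambda s\bigr)^{-1}$. What you proved one line earlier is the opposite: $1/P(s)\le 1/P(0)-a_*b_\lambda s$, that is, $P(s)\ge\bigl(P(0)^{-1}-a_*b_\lambda s\bigr)^{-1}$. Pointwise comparison with the Riccati solution therefore gives only a \emph{lower} bound for $\int_0^tP^\gamma\,ds$. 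It cannot give an upper bound, because nothing stops $P$ from growing much faster than the comparison solution.

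The missing idea is to use the differential inequality as a change of variables, which is what the paper does. Since $P$ is Lipschitz and $P\ge P(0)>0$ on $[0,T]$, the function $P^{\gamma-1}$ is Lipschitz. Using $P'\ge a_*b_\lambda P^2$ a.e., for a.e.\ $s$ we get
\[
\frac{d}{ds}P(s)^{\gamma-1}=-(1-\gamma)P(s)^{\gamma-2}P'(s)\le-(1-\gamma)a_*b_\lambda P(s)^{\gamma}.
\]
Integrating over $[0,t]$ gives
\[
(1-\gamma)a_*b_\lambda\int_0^tP(s)^\gamma\,ds\le P(0)^{\gamma-1}-P(t)^{\gamma-1}\le P(0)^{\gamma-1}.
\]
Equivalently, $\int_0^tP^\gamma\,ds\le(a_*b_\lambda)^{-1}\int_{P(0)}^{P(t)}\sigma^{\gamma-2}\,d\sigma$. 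Fast growth of $P$ is harmless here precisely because little time is spent at large values of $P$.

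Your route to \eqref{bt-cone} differs from the paper's and is valid. The paper argues only at the first exit time $t_0$. There, at a maximum of $r$ with $r=A=P$ and $|S|\le P$, the forcing is $\le0$; this is your factorization $g(P)=(P-A)(\lambda A+(2-\lambda)P)$ evaluated at $A=P$. Hence $D_+(A-P)(t_0)\le D_+A(t_0)-D_-P(t_0)<0$, which contradicts $A-P$ becoming positive just after $t_0$ directly from the definition of $D_+$.

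You instead use the factorization quantitatively, $D_+A\le 2a^*(P-A)P$, on an interval before the first contact time, and close with a monotonicity lemma for Dini derivatives. This costs you that extra lemma but yields the stronger conclusion that $A<P$ and $L<P$ hold strictly on $[0,T]$. You should state explicitly that $g(s)\le g(P)$ on $[0,P]$ relies on $g(0)=-\lambda A^2\le0\le g(P)$. The same applies to the analogous function for $L$, which also uses $L\ge0$ by compact support.
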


\begin{proof}
We set
\[
  r=-R.
\]
By Lemma \ref{zz1}, we have \(R(t,x)\le0\), and hence
\[
  r(t,x)\ge0.
\]

Since the initial data are compactly supported and \(c(u)\) is bounded on
\([0,T]\times\mathbb R\), the finite propagation speed implies that \(R\)
and \(S\) have supports contained in a fixed compact interval on \([0,T]\).
Thus Lemma \ref{lem:dini_sup} is applicable to \(S\), \(r\), and \(-S\).

We first prove \eqref{bt-cone}. Assume, contrary to the assertion, that one of the inequalities in \eqref{bt-cone} fails. Since
\[
  A(0)<P(0),
  \qquad
  L(0)<P(0),
\]
there exists a first time \(t_0\in(0,T)\) such that
\[
  A(t)\le P(t),
  \qquad
  L(t)\le P(t)
\]
for \(0\le t\le t_0\), and either \(A-P\) or \(L-P\) becomes positive immediately after \(t_0\). At \(t=t_0\), at least one of the following equalities holds:
\[
  A(t_0)=P(t_0),
  \qquad
  L(t_0)=P(t_0).
\]

On \([0,t_0]\), we have
\[
  A(t)\le P(t),
  \qquad
  L(t)\le P(t).
\]
Let \(x\) be a maximum point of \(S(t,\cdot)\). Then
\[
  S(t,x)=P(t),
  \qquad
  0\le r(t,x)\le A(t)\le P(t).
\]
The equation for \(S\) is
\[
  \partial_+S
  =
  \frac{c'(u)}{4c(u)}
  \left\{
    \lambda S^2
    +2(\lambda-1)rS
    -(2-\lambda)r^2
  \right\}.
\]
For fixed \(P(t)\ge0\), the function
\[
  z\mapsto
  \lambda P(t)^2
  +2(\lambda-1)zP(t)
  -(2-\lambda)z^2
\]
is concave on \(0\le z\le P(t)\). Hence its minimum on this interval is attained at an endpoint. Therefore, for \(0\le z\le P(t)\),
\[
  \lambda P(t)^2
  +2(\lambda-1)zP(t)
  -(2-\lambda)z^2
  \ge
  b_\lambda P(t)^2,
\]
where
\[
  b_\lambda=\min\{\lambda,4(\lambda-1)\}.
\]
Using \eqref{bt-a}, we obtain
\[
  \partial_+S(t,x)\ge a_*b_\lambda P(t)^2
\]
at every maximum point of \(S(t,\cdot)\). 
By Lemma \ref{lem:dini_sup},
\[
  D_-P(t)\ge a_*b_\lambda P(t)^2
\]
for \(0\le t\le t_0\). In particular, \(P(t)>0\) on \([0,t_0]\).

We next exclude the possibility that \(A(t_0)=P(t_0)\) and that \(A-P\) becomes positive immediately after \(t_0\). Let \(x\) be a maximum point of \(r(t_0,\cdot)\). Then
\[
  r(t_0,x)=A(t_0)=P(t_0).
\]
Since \(L(t_0)\le P(t_0)\), we have
\[
  -P(t_0)\le S(t_0,x)\le P(t_0).
\]
The equation for \(r\) is
\[
  \partial_-r
  =
  \frac{c'(u)}{4c(u)}
  \left\{
    (2-\lambda)S^2
    -2(\lambda-1)rS
    -\lambda r^2
  \right\}.
\]
For every \(-P(t_0)\le \sigma\le P(t_0)\), we have
\[
  (2-\lambda)\sigma^2
  -2(\lambda-1)P(t_0)\sigma
  -\lambda P(t_0)^2
  \le0.
\]
Indeed, the left-hand side is a convex function of \(\sigma\), and at the endpoints it equals \(0\) and \(4(1-\lambda)P(t_0)^2\), respectively. Hence
\[
  \partial_-r(t_0,x)\le0
\]
at every maximum point of \(r(t_0,\cdot)\). By Lemma \ref{lem:dini_sup},
\[
  D_+A(t_0)\le0.
\]
On the other hand,
\[
  D_-P(t_0)\ge a_*b_\lambda P(t_0)^2>0.
\]
Therefore
\[
  D_+(A-P)(t_0)
  \le
  D_+A(t_0)-D_-P(t_0)
  <0.
\]
This is impossible. Indeed, if \(A-P\) becomes positive immediately after \(t_0\), then there exists a sequence \(h_n\downarrow0\) such that
\[
  (A-P)(t_0+h_n)>0.
\]
Since
\[
  (A-P)(t_0)=0,
\]
we have
\[
  D_+(A-P)(t_0)
  \ge
  \limsup_{n\to\infty}
  \frac{(A-P)(t_0+h_n)-(A-P)(t_0)}{h_n}
  \ge 0.
\]
This contradicts
\[
  D_+(A-P)(t_0)<0.
\]
Therefore \(A-P\) cannot become positive immediately after \(t_0\).

It remains to exclude the possibility that \(L(t_0)=P(t_0)\) and that \(L-P\) becomes positive immediately after \(t_0\). Set
\[
  \ell=-S.
\]
Then
\[
  L(t)=\sup_{x\in\mathbb R}\ell(t,x).
\]
The equation for \(\ell\) is
\[
  \partial_+\ell
  =
  \frac{c'(u)}{4c(u)}
  \left\{
    -\lambda \ell^2
    +2(\lambda-1)r\ell
    +(2-\lambda)r^2
  \right\}.
\]
Let \(x\) be a maximum point of \(\ell(t_0,\cdot)\). Then
\[
  \ell(t_0,x)=L(t_0)=P(t_0).
\]
Moreover,
\[
  0\le r(t_0,x)\le A(t_0)\le P(t_0).
\]
For every \(0\le \rho\le P(t_0)\), we have
\[
  -\lambda P(t_0)^2
  +2(\lambda-1)\rho P(t_0)
  +(2-\lambda)\rho^2
  \le0.
\]
Indeed, the left-hand side is a convex function of \(\rho\), and its values at \(\rho=0\) and \(\rho=P(t_0)\) are \(-\lambda P(t_0)^2\) and \(0\), respectively. Hence
\[
  \partial_+\ell(t_0,x)\le0
\]
at every maximum point of \(\ell(t_0,\cdot)\). By Lemma \ref{lem:dini_sup},
\[
  D_+L(t_0)\le0.
\]
Since
\[
  D_-P(t_0)\ge a_*b_\lambda P(t_0)^2>0,
\]
we get
\[
  D_+(L-P)(t_0)
  \le
  D_+L(t_0)-D_-P(t_0)
  <0.
\]
This contradicts the fact that \(L-P\) becomes positive immediately after \(t_0\).

Thus \eqref{bt-cone} holds on \([0,T]\). The argument above also gives
\[
  D_-P(t)\ge a_*b_\lambda P(t)^2
\]
for \(0\le t<T\). This proves \eqref{P-riccati-dini}.

By Lemma \ref{lem:sup_lipschitz}, \(P\) is locally Lipschitz. Hence, at
almost every point of differentiability of \(P\), we have
\(D_-P(t)=P'(t)\). Thus \eqref{P-riccati-dini} gives
\[
  P'(t)\ge a_*b_\lambda P(t)^2
\]
for almost every \(t\in[0,T)\). Hence
\[
  \frac{d}{dt}\left(\frac1{P(t)}\right)
  \le
  -a_*b_\lambda
\]
for almost every \(t\in[0,T)\). Integrating over \([0,t]\), where \(0\le t<T\), we obtain
\[
  \frac1{P(t)}
  \le
  \frac1{P(0)}-a_*b_\lambda t.
\]
This gives \eqref{P-lower}.

Finally, let \(0<\gamma<1\). Since
\[
  P'(t)\ge a_*b_\lambda P(t)^2
\]
for almost every \(t\in[0,T)\), we have
\[
  \frac{d}{dt}P(t)^{\gamma-1}
  \le
  -a_*b_\lambda(1-\gamma)P(t)^\gamma
\]
for almost every \(t\in[0,T)\). Integrating this inequality over \([0,t]\), where \(0\le t<T\), we obtain
\[
  a_*b_\lambda(1-\gamma)
  \int_0^t P(s)^\gamma\,ds
  \le
  P(0)^{\gamma-1}-P(t)^{\gamma-1}.
\]
Since \(P(t)>0\),
\[
  P(0)^{\gamma-1}-P(t)^{\gamma-1}
  \le
  P(0)^{\gamma-1}.
\]
Therefore \eqref{P-gamma-int} follows.
\end{proof}

Next, we improve the estimate for \(E(t)\).

\begin{proposition}\label{prop:E_improve}
For the solution corresponding to the initial data of Theorem \ref{mainthm}, assume that \eqref{bt-c}, \eqref{bt-a} and \eqref{bt-A} hold on \([0,T]\).
Then there exists \(\delta_1>0\), depending on \(c\), \(\lambda\), \(\phi\), and the constant in \eqref{bt-A}, but independent of \(\varepsilon\) and \(T\), such that if
\(0<\delta\le \delta_1\), then
\[
  E(t)\le 2E_0
\]
for \(0\le t\le T\).
\end{proposition}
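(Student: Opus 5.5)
The plan is to derive an exact identity for \(\frac{d}{dt}E(t)\) from the system \eqref{fs} and bound its right-hand side by \(g(t)E(t)\), where \(g\) is controlled by \(A\) and \(P\). Then \eqref{bt-A} and \eqref{P-gamma-int} show that \(\int_0^T g\) is small, uniformly in \(\varepsilon\) and \(T\). Gronwall's inequality then gives \(E(t)\le e^{\int g}E_0\le 2E_0\) for \(\delta\) small.

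\emph{Step 1: the identity.} Since \(p>1\), \(|\cdot|^p\) is \(C^1\). Moreover \(R\) and \(S\) are \(C^1\) with compact support on \([0,T]\) by finite propagation speed, so we may differentiate under the integral. We write
\[
\partial_t|R|^p-\partial_x(c|R|^p)=\partial_-|R|^p-c'(u)u_x|R|^p,
\qquad
\partial_t|S|^p+\partial_x(c|S|^p)=\partial_+|S|^p+c'(u)u_x|S|^p,
\]
and insert \(u_x=(R-S)/(2c)\) together with the equations \eqref{fsin}. Because \(p\lambda=2\), the cubic terms \(|R|^pR\) and \(|S|^pS\) cancel exactly. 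Using also \(p(2-\lambda)=2p-2\), integration in \(x\) should give
\[
E'(t)=(2p-2)\int_{\mathbb R}\frac{c'(u)}{4c(u)}\Bigl(|R|^pS+|S|^pR-|R|^{p-2}RS^2-|S|^{p-2}SR^2\Bigr)dx .
\]
With \(r=-R\ge0\) (Lemma \ref{zz1}), the integrand is bounded in absolute value by \(r^p|S|+r^{p-1}S^2+r|S|^p+r^2|S|^{p-1}\).

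\emph{Step 2: pointwise bounds.} Proposition \ref{prop:riccati_P} applies under \eqref{bt-c} and \eqref{bt-a}, so \(r\le A\le P\) and \(|S|\le\max(P,L)=P\). We bound the four terms as follows.
\begin{itemize}
\item By Young's inequality, \(r^p|S|\le A\,r^{p-1}|S|\le A(r^p+|S|^p)\).
\item Directly, \(r|S|^p\le A|S|^p\).
\item By Young's inequality again, \(r^2|S|^{p-1}\le A\,r|S|^{p-1}\le A(r^p+|S|^p)\).
\item The delicate term is \(r^{p-1}S^2\). Here only \(r^{p-1}\) carries a small factor, so we use \(S^2\le|S|^pP^{2-p}\) and obtain \(r^{p-1}S^2\le A^{p-1}P^{\theta}|S|^p\).
\end{itemize}
Combining these with \eqref{bt-a}, we get \(E'(t)\le C\bigl(A+A^{p-1}P^\theta\bigr)E(t)\).

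\emph{Step 3: time integrability.} This is the main point. By \eqref{bt-A}, \(A\le K_2E_0P^\theta\) and \(A^{p-1}P^\theta\le (K_2E_0)^{p-1}P^{\theta p}\). Both exponents are below one: \(\theta<1\), and \(\theta p=(2-p)p<1\) for \(1<p<2\). Hence \eqref{P-gamma-int} applies and gives
\[
\int_0^T g\,ds\le C\Bigl(E_0P(0)^{\theta-1}+E_0^{\,p-1}P(0)^{\theta p-1}\Bigr).
\]
The remaining step is scaling. We have \(E_0\le C(\delta/\varepsilon)^p\varepsilon\). For the lower bound on \(P(0)\), we use \(S(0,x)=-2c\,\tfrac{\delta}{\varepsilon}\phi'(x/\varepsilon)\) and \(\sup(-\phi')>0\), which give \(P(0)\ge c_0\tfrac{\delta}{\varepsilon}\sup(-\phi')\). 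A direct computation then shows that the first term is \(O(\delta)\). For the second term, the powers of \(\varepsilon\) cancel, since \(p(p-1)+(2-p)p-1=p-1\), leaving \(O(\delta^{p-1})\).

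Thus \(\int_0^Tg\le C(K_2)(\delta+\delta^{p-1})\), independently of \(\varepsilon\) and \(T\). Choosing \(\delta_1\) so that this is at most \(\log 2\), Gronwall's inequality yields \(E(t)\le 2E_0\). I expect the only real obstacle to be the term \(r^{p-1}S^2\). It is exactly why \eqref{bt-A} must be the refined power \(P^\theta\) rather than the rough bound \(A\le P\), and why we need the sharp exponent check \(\theta p<1\) to use \eqref{P-gamma-int}.
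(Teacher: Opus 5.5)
Your proposal is correct and follows the paper's proof essentially step for step. You use the same $L^p$ identity: your expanded integrand is exactly $(R-S)\{S|R|^{p-2}R-R|S|^{p-2}S\}$ from \eqref{lp-identity}. You arrive at the same Gronwall coefficient $C(A+A^{p-1}P^{\theta})$, use \eqref{bt-A} and \eqref{P-gamma-int} with $\gamma=\theta$ and $\gamma=p\theta$, and obtain the same scaling bounds $E_0P(0)^{\theta-1}\le C\delta$ and $E_0^{p-1}P(0)^{p\theta-1}\le C\delta^{p-1}$. The only cosmetic difference is that you bound the four terms separately via Young's inequality, whereas the paper splits into the two cases $|S|\le r$ and $r\le |S|$.
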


\begin{proof}
By Proposition \ref{prop:riccati_P}, we have \eqref{bt-cone} and \eqref{P-gamma-int} on \([0,T]\).
We recall that
\[
  p=\frac{2}{\lambda},
  \qquad
  \theta=2-p.
\]
We first derive the \(L^p\)-identity. Multiplying the first equation of
\eqref{fs} by \(|R|^{p-2}R\), we obtain
\begin{equation}\label{lp-R-identity}
\begin{aligned}
  \frac1p
  \left\{
    \partial_t |R|^p
    -
    \partial_x\left(c(u)|R|^p\right)
  \right\}
  &=
  -\frac{c'(u)}{2pc(u)}(R-S)|R|^p  \\
  &\quad
  +\frac{c'(u)}{2c(u)}(RS-S^2)|R|^{p-2}R  \\
  &\quad
  +\lambda\frac{c'(u)}{4c(u)}(R-S)^2|R|^{p-2}R .
\end{aligned}
\end{equation}
Similarly, multiplying the third equation of \eqref{fs} by
\(|S|^{p-2}S\), we have
\begin{equation}\label{lp-S-identity}
\begin{aligned}
  \frac1p
  \left\{
    \partial_t |S|^p
    +
    \partial_x\left(c(u)|S|^p\right)
  \right\}
  &=
  \frac{c'(u)}{2pc(u)}(R-S)|S|^p  \\
  &\quad
  +\frac{c'(u)}{2c(u)}(SR-R^2)|S|^{p-2}S  \\
  &\quad
  +\lambda\frac{c'(u)}{4c(u)}(S-R)^2|S|^{p-2}S .
\end{aligned}
\end{equation}
Here we used
\[
  \partial_x c(u)
  =
  c'(u)\partial_xu
  =
  \frac{c'(u)}{2c(u)}(R-S).
\]
Adding the two identities \eqref{lp-R-identity} and \eqref{lp-S-identity}, we get
\[
\begin{aligned}
  &\frac1p
  \left\{
    \partial_t(|R|^p+|S|^p)
    -
    \partial_x\left(c(u)(|R|^p-|S|^p)\right)
  \right\} \\
  &=
  \frac{c'(u)}{c(u)}
  \left[
    -\frac{1}{2p}(R-S)(|R|^p-|S|^p)
  \right.\\
  &\qquad\left.
    +\frac12(R-S)
    \left\{
      S|R|^{p-2}R
      -
      R|S|^{p-2}S
    \right\}
  \right.\\
  &\qquad\left.
    +\frac{\lambda}{4}(R-S)^2
    \left(
      |R|^{p-2}R+|S|^{p-2}S
    \right)
  \right].
\end{aligned}
\]
Since \(p=2/\lambda\), we have
\[
  \frac{\lambda}{4}=\frac{1}{2p}.
\]
Moreover,
\[
\begin{aligned}
  &(R-S)(|R|^p-|S|^p)
  -(R-S)^2
  \left(
    |R|^{p-2}R+|S|^{p-2}S
  \right)  \\
  &\qquad =
  (R-S)
  \left\{
    S|R|^{p-2}R
    -
    R|S|^{p-2}S
  \right\}.
\end{aligned}
\]
Therefore, we obtain
\begin{equation}\label{lp-identity}
\begin{aligned}
  &\frac1p
  \left\{
    \partial_t(|R|^p+|S|^p)
    -
    \partial_x\left(c(u)(|R|^p-|S|^p)\right)
  \right\}  \\
  &\qquad =
  \left(
    \frac12-\frac{\lambda}{4}
  \right)
  \frac{c'(u)}{c(u)}
  (R-S)
  \left\{
    S|R|^{p-2}R
    -
    R|S|^{p-2}S
  \right\}.
\end{aligned}
\end{equation}
We next estimate the right-hand side of \eqref{lp-identity}. Since \(R\le0\), we put
\[
  r=-R.
\]
Then \(|R|=r\). By \eqref{bt-cone}, we have
\[
  0\le r(t,x)\le A(t),
  \qquad
  |S(t,x)|\le P(t).
\]
We claim that
\begin{equation*}
\begin{aligned}
  \left|
    (R-S)
    \left\{
      S|R|^{p-2}R
      -
      R|S|^{p-2}S
    \right\}
  \right|
  \le
  4
  \left\{
    A(t)r^p
    +
    A(t)^{p-1}P(t)^{2-p}|S|^p
  \right\}.
\end{aligned}
\end{equation*}
Indeed, the left-hand side is bounded by
\[
  (r+|S|)
  \left(
    |S|r^{p-1}
    +
    r|S|^{p-1}
  \right).
\]
If \(|S|\le r\), then
\[
  (r+|S|)
  \left(
    |S|r^{p-1}
    +
    r|S|^{p-1}
  \right)
  \le
  4 r^{p+1}
  \le
  4 A(t)r^p.
\]
If \(r\le |S|\), then
\[
\begin{aligned}
  (r+|S|)
  \left(
    |S|r^{p-1}
    +
    r|S|^{p-1}
  \right)
  &\le
  2
  \left(
    r^{p-1}|S|^2
    +
    r|S|^p
  \right)  \\
  &\le
  4 A(t)^{p-1}P(t)^{2-p}|S|^p.
\end{aligned}
\]
Thus we obtain the above inequality.

By \eqref{bt-A}, we have
\[
  A(t)\le K_2E_0P(t)^\theta.
\]
Since \(2-p=\theta\), it follows that
\[
  A(t)^{p-1}P(t)^{2-p}
  \le
  C E_0^{p-1}P(t)^{p\theta}.
\]
Therefore
\[
  \frac{d}{dt}E(t)
  \le
  C
  \left\{
    E_0P(t)^\theta
    +
    E_0^{p-1}P(t)^{p\theta}
  \right\}
  E(t).
\]
Since
\[
  0<\theta<1
\]
and
\[
  p\theta=p(2-p)<1,
\]
we may apply \eqref{P-gamma-int} with \(\gamma=\theta\) and \(\gamma=p\theta\). Hence
\[
\begin{aligned}
  \int_0^t
  \left\{
    E_0P(s)^\theta
    +
    E_0^{p-1}P(s)^{p\theta}
  \right\}\,ds
  \le
  C
  \left\{
    E_0P(0)^{\theta-1}
    +
    E_0^{p-1}P(0)^{p\theta-1}
  \right\}.
\end{aligned}
\]
For the initial data in Theorem \ref{mainthm}, we have
\[
  S(0,x)
  =
  -2c(\delta\phi(x/\varepsilon))
  \frac{\delta}{\varepsilon}\phi'(x/\varepsilon).
\]
Let
\[
  m_\phi=\sup_{x\in\mathbb R}(-\phi'(x))>0.
\]
Taking \(\delta>0\) sufficiently small, we may assume that
\[
  \frac{c_0}{2}\le c(\delta\phi(x))\le 2c_0.
\]
Hence
\[
  P(0)\ge c_0m_\phi\frac{\delta}{\varepsilon}.
\]
Moreover, since \(R(0,x)=0\), by the change of variables \(y=x/\varepsilon\),
\[
\begin{aligned}
  E_0
  &=
  \int_{\mathbb R}|S(0,x)|^p\,dx  \\
  &\le
  (4c_0)^p
  \left(\frac{\delta}{\varepsilon}\right)^p
  \int_{\mathbb R}|\phi'(x/\varepsilon)|^p\,dx  \\
  &=
  (4c_0)^p
  \|\phi'\|_{L^p}^p
  \left(\frac{\delta}{\varepsilon}\right)^p\varepsilon.
\end{aligned}
\]
Therefore,
\[
  E_0P(0)^{\theta-1}\le C\delta,
  \qquad
  E_0^{p-1}P(0)^{p\theta-1}\le C\delta^{p-1}.
\]
Consequently,
\[
  \int_0^t
  \left\{
    E_0P(s)^\theta
    +
    E_0^{p-1}P(s)^{p\theta}
  \right\}\,ds
  \le
  C(\delta+\delta^{p-1}).
\]
By Gronwall's inequality,
\[
  E(t)
  \le
  E_0
  \exp\{C(\delta+\delta^{p-1})\}.
\]
We note that the constants above may depend on $\phi, c, \lambda$.

Choose \(\delta_1>0\) so small that
\[
  \exp\{C(\delta+\delta^{p-1})\}\le 2
\]
for \(0<\delta\le\delta_1\). Then, for such \(\delta\), Gronwall's inequality gives
\[
  E(t)\le 2E_0
\]
for \(0\le t \leq T\). Since \(E(t)\) is continuous in \(t\), the same estimate also holds at \(t=T\).
\end{proof}

The next estimate is an improvement of the energy estimate along characteristic
curves. It can be proved in almost the same way as the improvement of the
estimate for \(E(t)\). Although the same estimate also holds for \(R\), we only
state and prove the estimate for \(S\), which is needed below.

\begin{proposition}\label{prop:char_improve}
For the solution corresponding to the initial data of Theorem \ref{mainthm}, assume that \eqref{bt-c}, \eqref{bt-a}, and \eqref{bt-A} hold on \([0,T]\).
Then there exists \(0<\delta_2\le\delta_1\), depending on \(c\), \(\lambda\), \(\phi\), and \(K_2\), but independent of \(\varepsilon\) and \(T\), such that if \(0<\delta\le\delta_2\), then
\[
  \int_0^t |S(s,x_-(s))|^p\,ds
  \le
  \frac{4}{c_0}E_0
\]
for \(0\le t\le T\) and for every minus characteristic curve \(x_-(s)\).
\end{proposition}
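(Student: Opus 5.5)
We integrate the divergence-form identity \eqref{lp-identity} over a characteristic triangle. Fix \(t\in(0,T]\) and a minus characteristic curve \(x_-(s)\), \(0\le s\le t\). Set
\[
  \Omega=\{(s,y)\mid 0\le s\le t,\ y\ge x_-(s)\},
\]
the region to the right of the curve. By finite propagation speed, \(R\) and \(S\) vanish for large \(y\). Write \eqref{lp-identity} as
\[
  \partial_t(|R|^p+|S|^p)-\partial_x\bigl(c(u)(|R|^p-|S|^p)\bigr)=pG,
\]
where \(G\) is the right-hand side of \eqref{lp-identity}, and apply the divergence theorem on \(\Omega\).

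The boundary contributions are as follows.
\begin{itemize}
\item[(i)] On the top, \(s=t\), the contribution is \(\int_{x_-(t)}^\infty(|R|^p+|S|^p)\,dy\ge0\).
\item[(ii)] On the bottom, \(s=0\), the contribution is \(-\int_{x_-(0)}^\infty(|R|^p+|S|^p)(0,y)\,dy\ge -E_0\).
\item[(iii)] On the lateral boundary \(y=x_-(s)\), with \(\dot x_-=-c\), the flux of the field \(\bigl(|R|^p+|S|^p,\,-c(|R|^p-|S|^p)\bigr)\) through the left-pointing normal, measured in \(ds\), is
\[
  c\bigl(|R|^p+|S|^p\bigr)-c\bigl(|R|^p-|S|^p\bigr)=2c|S|^p .
\]
The \(|R|^p\) terms cancel, which is the point of using the minus characteristic.
\end{itemize}
The sign of (iii) should be double-checked. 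Physically, \(S\)-mass travels rightward and exits the left-bounded region through the left-moving curve, while \(R\) moves with the curve. This gives
\[
  \int_{x_-(t)}^\infty(|R|^p+|S|^p)\,dy+\int_0^t 2c(u)|S|^p(s,x_-(s))\,ds
  =\int_{x_-(0)}^\infty(|R|^p+|S|^p)(0,y)\,dy+p\iint_\Omega G .
\]

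Next we control the source term exactly as in the proof of Proposition \ref{prop:E_improve}. Proposition \ref{prop:riccati_P} gives \eqref{bt-cone}. The pointwise bound established there,
\[
  |G|\le C\bigl\{A(s)r^p+A(s)^{p-1}P(s)^{2-p}|S|^p\bigr\},
\]
together with \eqref{bt-A}, yields
\[
  \iint_\Omega|G|\le C\int_0^t\bigl\{E_0P(s)^\theta+E_0^{p-1}P(s)^{p\theta}\bigr\}E(s)\,ds .
\]
Here we bounded the spatial integral over the slice by \(E(s)\). Proposition \ref{prop:E_improve} gives \(E(s)\le 2E_0\) (valid since \(\delta_2\le\delta_1\)). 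Then \eqref{P-gamma-int} with \(\gamma=\theta\) and \(\gamma=p\theta\), combined with the bounds \(E_0P(0)^{\theta-1}\le C\delta\) and \(E_0^{p-1}P(0)^{p\theta-1}\le C\delta^{p-1}\), gives
\[
  \iint_\Omega|G|\le C(\delta+\delta^{p-1})E_0 .
\]

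Using \(c\ge c_0/2\) from \eqref{bt-c}, we conclude
\[
  c_0\int_0^t|S(s,x_-(s))|^p\,ds\le E_0+C(\delta+\delta^{p-1})E_0 .
\]
Choosing \(\delta_2\le\delta_1\) so that \(C(\delta+\delta^{p-1})\le 3\) gives the claimed bound \(\int_0^t|S(s,x_-(s))|^p\,ds\le 4E_0/c_0\); even a stricter choice \(C(\delta+\delta^{p-1})\le1\) works and gives \(2E_0/c_0\).

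The main obstacle is justifying the divergence theorem on the unbounded region with a curved, merely \(C^1\) lateral boundary. One way is to truncate \(\Omega\) at large \(y\), using the compact support of \(R\) and \(S\). The other is to differentiate
\[
  \int_{x_-(s)}^\infty(|R|^p+|S|^p)\,dy
\]
in \(s\) directly via the Leibniz rule. The second route avoids Green's theorem and makes the boundary term \(2c|S|^p\) appear transparently. Since \(1<p<2\), \(|R|^p\) and \(|S|^p\) are \(C^1\), so the identity holds classically.
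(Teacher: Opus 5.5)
\textbf{The lateral flux has the wrong sign for the region you chose.} Along $y=x_-(s)$, the outward normal of $\Omega=\{y\ge x_-(s)\}$ in $(s,y)$-coordinates is $(-c,-1)$. You used $(c,1)$, which points into $\Omega$. The Leibniz computation you propose gives
\[
  \frac{d}{ds}\int_{x_-(s)}^\infty(|R|^p+|S|^p)\,dy
  =2c(u)|S|^p(s,x_-(s))+p\int_{x_-(s)}^\infty G\,dy ,
\]
so the correct identity is
\[
  \int_{x_-(t)}^\infty(|R|^p+|S|^p)\,dy
  =\int_{x_-(0)}^\infty(|R|^p+|S|^p)(0,y)\,dy
  +\int_0^t 2c(u)|S|^p(s,x_-(s))\,ds+p\iint_\Omega G .
\]
Your physical heuristic is backwards. $S$ moves right with speed $c$ while the curve moves left with speed $c$, so $S$-mass crosses the curve from left to right, i.e.\ it enters $\Omega$. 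A linear check confirms this: take $c\equiv1$, $G=0$, $R\equiv0$, $S_0$ supported in $[-2,-1]$, and $x_-(s)=-s$. Then the bottom term vanishes while the top and lateral terms both equal $\|S_0\|_{L^p}^p$, so your displayed identity would force $2\|S_0\|_{L^p}^p=0$. With the correct sign you cannot drop the top term and bound the bottom one by $E_0$. Your inequality $c_0\int_0^t|S|^p\,ds\le E_0+C(\delta+\delta^{p-1})E_0$ therefore does not follow as written.

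\textbf{The repair is immediate.} Either of the following works.
\begin{enumerate}
\item[(a)] Use the region to the left of the curve, $\{0\le s\le t,\ y\le x_-(s)\}$, where compact support handles $y\to-\infty$. There $S$ exits and the $R$-flux still cancels. Your identity then holds verbatim with $\int_{-\infty}^{x_-(t)}$ and $\int_{-\infty}^{x_-(0)}$ in place of $\int_{x_-(t)}^\infty$ and $\int_{x_-(0)}^\infty$, and the rest of your argument goes through unchanged.
\item[(b)] Keep $\Omega$, discard the term $-\int_{x_-(0)}^\infty(\cdots)(0,y)\,dy\le0$, and bound the top term by $E(t)\le 2E_0$ from Proposition~\ref{prop:E_improve}. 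This gives $c_0\int_0^t|S|^p\,ds\le 2E_0+C(\delta+\delta^{p-1})E_0$, which suffices once $C(\delta+\delta^{p-1})\le2$.
\end{enumerate}
For comparison, the paper integrates \eqref{lp-identity} over the backward characteristic triangle with apex $(t,x)$. Its right side is exactly the minus characteristic, with the triangle lying to its left. Both lateral fluxes leave that triangle, which controls $R$ along plus characteristics and $S$ along minus characteristics at once, with a factor $\tfrac12$ on the data. Version (a) isolates the $S$-flux alone and is equally valid.
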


\begin{proof}
By Proposition \ref{prop:riccati_P}, we have \eqref{bt-cone} and \eqref{P-gamma-int} on \([0,T]\).
Let \((t,x)\in [0,T]\times \mathbb R\) be fixed. We denote by
\(t_+(y)\) and \(t_-(y)\) the characteristic curves through \((t,x)\). 
Let \(x_1\) and \(x_2\) be the points where these two characteristics meet
the \(x\)-axis. Namely, \(t_+(x_1)=0\) and
\(t_-(x_2)=0\). We denote by \(\Delta\) the characteristic triangle
surrounded by the two characteristic curves and the interval \([x_1,x_2]\)
on the \(x\)-axis.

Integrating \eqref{lp-identity} over \(\Delta\), we obtain
\[
\begin{aligned}
  &\int_{x_1}^{x}|R(t_+(y),y)|^p\,dy
  +
  \int_x^{x_2}|S(t_-(y),y)|^p\,dy  \\
  &\quad\le
  \frac12
  \int_{x_1}^{x_2}
  \left(
    |R(0,y)|^p+|S(0,y)|^p
  \right)\,dy  \\
  &\qquad
  +
  C\iint_{\Delta}
  \left|
    (R-S)
    \left\{
      S|R|^{p-2}R
      -
      R|S|^{p-2}S
    \right\}
  \right|\,dy\,ds .
\end{aligned}
\]
The first term on the right-hand side is estimated as 
\[
\frac12
  \int_{x_1}^{x_2}
  \left(
    |R(0,y)|^p+|S(0,y)|^p
  \right)\,dy  \leq  \frac12E_0.
\]

In the same way as in Proposition \ref{prop:E_improve}, the double integral
is estimated as
\[
\begin{aligned}
  &\iint_{\Delta}
  \left|
    (R-S)
    \left\{
      S|R|^{p-2}R
      -
      R|S|^{p-2}S
    \right\}
  \right|\,dy\,ds \\
  &\quad\le
  C\int_0^t
  \left\{
    A(s)
    +
    A(s)^{p-1}P(s)^{2-p}
  \right\}
  E(s)\,ds .
\end{aligned}
\]
By Proposition \ref{prop:E_improve}, we have
\[
  E(s)\le 2E_0 .
\]
Moreover, by \eqref{bt-A},
\[
  A(s)\le C E_0P(s)^\theta.
\]
Since \(\theta=2-p\), we also have
\[
  A(s)^{p-1}P(s)^{2-p}
  \le
  C E_0^{p-1}P(s)^{p\theta}.
\]
Therefore,
\[
\begin{aligned}
  &\iint_{\Delta}
  \left|
    (R-S)
    \left\{
      S|R|^{p-2}R
      -
      R|S|^{p-2}S
    \right\}
  \right|\,dy\,ds \\
  &\quad\le
  C E_0
  \int_0^t
  \left\{
    E_0P(s)^\theta
    +
    E_0^{p-1}P(s)^{p\theta}
  \right\}\,ds .
\end{aligned}
\]
Using \eqref{P-gamma-int} with \(\gamma=\theta\) and \(\gamma=p\theta\), and using the same estimates of \(E_0\) and \(P(0)\) in the proof of Proposition \ref{prop:E_improve}, we obtain, after increasing \(C\) if necessary,
\[
\begin{aligned}
  &\iint_\Delta
  \left|
    (R-S)\left\{
      S|R|^{p-2}R
      -
      R|S|^{p-2}S
    \right\}
  \right|\,dy\,ds  \\
  &\qquad\le
  CE_0(\delta+\delta^{p-1}).
\end{aligned}
\]
Choose \(0<\delta_2\le\delta_1\) so small that
\[
  C(\delta+\delta^{p-1})\le \frac32
\]
for \(0<\delta\le\delta_2\). Then
\[
\iint_\Delta
  \left|
    (R-S)\left\{
      S|R|^{p-2}R
      -
      R|S|^{p-2}S
    \right\}
  \right|\,dy\,ds
\le
\frac32 E_0.
\]
Consequently,
\[
\begin{aligned}
  &\int_{x_1}^{x}|R(t_+(y),y)|^p\,dy
  +
  \int_x^{x_2}|S(t_-(y),y)|^p\,dy  \\
  &\quad\le
  \frac12E_0+\frac32E_0
  =
  2E_0 .
\end{aligned}
\]
In particular,
\[
  \int_x^{x_2}|S(t_-(y),y)|^p\,dy
  \le
  2E_0 .
\]

Since
\[
  \frac{d}{dy}t_-(y)
  =
  -\frac{1}{c(u(t_-(y),y))}
\]
and \eqref{bt-c} gives
\[
  c(u(t_-(y),y))\ge \frac{c_0}{2},
\]
we have
\[
  \left|\frac{d}{dy}t_-(y)\right|
  \le
  \frac{2}{c_0}.
\]
Therefore,
\[
\begin{aligned}
  \int_0^t |S(s,x_-(s))|^p\,ds
  &=
  \int_x^{x_2}
  |S(t_-(y),y)|^p
  \left|\frac{d}{dy}t_-(y)\right|\,dy  \\
  &\le
  \frac{2}{c_0}
  \int_x^{x_2}|S(t_-(y),y)|^p\,dy  \\
  &\le
  \frac{4}{c_0}E_0 .
\end{aligned}
\]
This completes the proof.
\end{proof}

\begin{remark}
Let \((t,x)\in [0,T]\times\mathbb R\) be fixed, and let
\(t_+(y)\), \(t_-(y)\), \(x_1\), and \(x_2\) be as in the proof of
Proposition \ref{prop:char_improve}. By the same argument as in
Proposition \ref{prop:char_improve}, we can obtain
\[
  \int_0^t |R(s,x_+(s))|^p\,ds
  \le \frac{4}{c_0}E_0
\tag{3.15}
\]
and
\[
  \int_{x_1}^{x}|R(t_+(y),y)|^p\,dy
  +
  \int_x^{x_2}|S(t_-(y),y)|^p\,dy
  \le C E_0,
\tag{3.16}
\]
where \(C\) is a positive constant.
\end{remark}

The next estimate is an improvement of the estimate for \(A(t)\).

\begin{proposition}\label{prop:A_improve}
For the solution corresponding to the initial data of Theorem \ref{mainthm}, assume that \eqref{bt-c}, \eqref{bt-a}, and \eqref{bt-A}
hold on \([0,T]\). Assume also that \(0<\delta\le \delta_2\), where
\(\delta_2\) is given in Proposition \ref{prop:char_improve}.
Suppose that \(K_2\) is chosen so large that
\[
  K_2
  \ge
  \frac{8(2-\lambda)a^*4^{(\lambda-1)/2}}{c_0}.
\]
Then it holds that
\[
A(t)\le \frac{K_2}{2}E_0P(t)^\theta
\]
for \(0\le t\le T\).
\end{proposition}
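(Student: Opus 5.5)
The plan is to estimate \(r=-R\) directly along minus characteristics using \eqref{ch-req}. Then the \(S^2\) source is split into \(|S|^p\cdot|S|^{\theta}\): the factor \(|S|^p\) is controlled by Proposition \ref{prop:char_improve}, and \(|S|^\theta\) by \(P(t)^\theta\).

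\textbf{Step 1 (integrate along a characteristic).} Fix \((t,x)\in[0,T]\times\mathbb R\) and let \(x_-(s)\) be the minus characteristic with \(x_-(t)=x\). Rewriting \eqref{ch-req} in terms of \(r\) gives
\[
  \frac{d}{ds}\Bigl(c^{(\lambda-1)/2}r(s,x_-(s))\Bigr)
  =\frac{c'c^{(\lambda-3)/2}}{4}\bigl((2-\lambda)S^2-\lambda r^2\bigr)
  \le (2-\lambda)\frac{c'}{4c}\,c^{(\lambda-1)/2}S^2 .
\]
Since \(r(0,\cdot)=-R(0,\cdot)=0\), integrating over \([0,t]\) and using \eqref{bt-c}, \eqref{bt-a} gives
\[
  \Bigl(\tfrac{c_0}{2}\Bigr)^{(\lambda-1)/2}r(t,x)
  \le (2-\lambda)a^*(2c_0)^{(\lambda-1)/2}\int_0^t S(s,x_-(s))^2\,ds .
\]
Dividing through produces the factor \(4^{(\lambda-1)/2}\) appearing in the hypothesis on \(K_2\).

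\textbf{Step 2 (split the source).} Write \(S^2=|S|^p|S|^{2-p}=|S|^p|S|^\theta\). By \eqref{bt-cone} of Proposition \ref{prop:riccati_P}, we have \(-P(s)\le -L(s)\le S\le P(s)\), so \(|S(s,\cdot)|\le P(s)\).

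Next, \(P\) is nondecreasing on \([0,T]\). Indeed, \(P\) is Lipschitz by Lemma \ref{lem:sup_lipschitz}, and \(P'(s)\ge a_*b_\lambda P(s)^2\ge0\) for a.e. \(s\) by \eqref{P-riccati-dini}. Alternatively, one can simply invoke the lower bound \eqref{P-lower}. Hence \(|S(s,x_-(s))|^\theta\le P(t)^\theta\) for \(s\le t\).

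\textbf{Step 3 (apply the characteristic bound).} Since \(0<\delta\le\delta_2\), Proposition \ref{prop:char_improve} applies and gives
\[
  \int_0^t|S(s,x_-(s))|^p\,ds\le \frac{4}{c_0}E_0 .
\]
Combining the three steps yields
\[
  r(t,x)\le (2-\lambda)a^*4^{(\lambda-1)/2}\frac{4}{c_0}E_0P(t)^\theta\le\frac{K_2}{2}E_0P(t)^\theta
\]
by the assumed lower bound on \(K_2\). Taking the supremum over \(x\) gives the claim.

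The only delicate point is the monotonicity of \(P\), which is needed to pull \(P(s)^\theta\) out of the time integral as \(P(t)^\theta\). This follows from the Dini inequality of Proposition \ref{prop:riccati_P}, whose hypotheses are part of the present assumptions. Everything else is routine bookkeeping of constants.
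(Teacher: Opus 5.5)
Your proposal is correct and follows the paper's proof essentially step for step: integrate \eqref{ch-req} along the minus characteristic starting from \(R(0,\cdot)=0\), drop the \(-\lambda r^2\) term, bound \(\int_0^t S^2\,ds\le P(t)^{\theta}\int_0^t|S|^p\,ds\) using \(|S|\le P\) and the monotonicity of \(P\), and finish with Proposition \ref{prop:char_improve} and the choice of \(K_2\). One small caveat: your ``alternative'' justification via \eqref{P-lower} only gives \(P(t)\ge P(0)\), not \(P(s)\le P(t)\) for \(s\le t\), so rely on your primary argument (Lipschitz continuity plus \(P'\ge 0\) almost everywhere), which is valid.
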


\begin{proof}
By Proposition \ref{prop:riccati_P}, we have \eqref{bt-cone} and \eqref{P-gamma-int} on \([0,T]\).
Let \((t,x)\in[0,T]\times\mathbb R\) be fixed. We consider the minus characteristic curve \(x_-(s)\) satisfying
\[
  x_-(t)=x.
\]
We put
\[
  k=\frac{\lambda-1}{2}.
\]
Since \(R(0,x)=0\), it follows from \eqref{ch-req} that
\[
\begin{aligned}
  c(u(t,x))^k R(t,x)
  &=
  \int_0^t
  \frac{c'(u)c(u)^{(\lambda-3)/2}}{4}
  \left(
    \lambda R^2-(2-\lambda)S^2
  \right)(s,x_-(s))\,ds .
\end{aligned}
\]
Recalling that \(r=-R\), we obtain
\[
\begin{aligned}
  c(u(t,x))^k r(t,x)
  &=
  \int_0^t
  \frac{c'(u)c(u)^{(\lambda-3)/2}}{4}
  \left(
    (2-\lambda)S^2-\lambda R^2
  \right)(s,x_-(s))\,ds  \\
  &\le
  (2-\lambda)
  \int_0^t
  \frac{c'(u)}{4c(u)}
  c(u)^k
  S(s,x_-(s))^2\,ds .
\end{aligned}
\]
By \eqref{bt-a} and \eqref{bt-c}, we have
\[
  \frac{c'(u)}{4c(u)}\le a^*
\]
and
\[
  c(u)^k\le (2c_0)^k,
  \qquad
  c(u(t,x))^k\ge \left(\frac{c_0}{2}\right)^k.
\]
Therefore,
\[
  r(t,x)
  \le
  (2-\lambda)a^*4^k
  \int_0^t |S(s,x_-(s))|^2\,ds .
\]

By Proposition \ref{prop:riccati_P}, \(P(t)\) is nondecreasing on \([0,T]\). Hence, since \(p<2\),
\[
\begin{aligned}
  \int_0^t |S(s,x_-(s))|^2\,ds
  &\le
  P(t)^{2-p}
  \int_0^t |S(s,x_-(s))|^p\,ds .
\end{aligned}
\]
Using Proposition \ref{prop:char_improve}, we get
\[
  \int_0^t |S(s,x_-(s))|^p\,ds
  \le
  \frac{4}{c_0}E_0 .
\]
Thus
\[
  r(t,x)
  \le
  \frac{4(2-\lambda)a^*4^k}{c_0}
  E_0P(t)^{2-p}.
\]
Since
\[
  \theta=2-p,
\]
we have
\[
  r(t,x)
  \le
  \frac{4(2-\lambda)a^*4^{(\lambda-1)/2}}{c_0}
  E_0P(t)^\theta.
\]
Taking the supremum with respect to \(x\in\mathbb R\), we obtain
\[
  A(t)
  \le
  \frac{4(2-\lambda)a^*4^{(\lambda-1)/2}}{c_0}
  E_0P(t)^\theta.
\]
By the choice of \(K_2\),
\[
  \frac{4(2-\lambda)a^*4^{(\lambda-1)/2}}{c_0}
  \le
  \frac{K_2}{2}.
\]
Therefore,
\[
  A(t)
  \le
  \frac{K_2}{2}E_0P(t)^\theta.
\]
This completes the proof.
\end{proof}

Finally, we improve the estimate for \(u\). This also improves the upper and
lower bounds for \(c(u)\).

\begin{proposition}\label{prop:u_improve}
For the solution corresponding to the initial data of Theorem \ref{mainthm}, suppose that the conclusion of Proposition \ref{prop:A_improve} holds on
\([0,T]\). Let
\[
  m_\phi=\sup_{x\in\mathbb R}(-\phi'(x)).
\]
By \(\phi'(0)<0\), we have \(m_\phi>0\).
Set
\[
  C_*
  =
  \|\phi\|_{L^\infty}
  +
  \frac{K_2}{2a_*b_\lambda(1-\theta)}
  (4c_0)^p
  (c_0m_\phi)^{1-p}
  \|\phi'\|_{L^p}^p .
\]
Assume that
\[
  K_1\ge 2C_*.
\]
Then there exists \(\delta_3>0\), depending only on \(c\) and \(\phi\), such that if
\(0<\delta\le\delta_3\), then
\[
  \sup_{0\le t\le T}\|u(t)\|_{L^\infty}
  \le
  \frac{K_1}{2}\delta .
\]
\end{proposition}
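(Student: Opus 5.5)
The plan is to bound \(u\) by integrating along a characteristic along which the derivative of \(u\) is controlled by the \emph{small} Riemann variable \(R\), for which we already have the refined bound of Proposition \ref{prop:A_improve}. Recall that along a plus characteristic \(x_+(s)\) one has \(\frac{d}{ds}u(s,x_+(s))=R(s,x_+(s))\). Fix \((t,x)\in[0,T]\times\mathbb R\) and let \(x_+(s)\) be the plus characteristic with \(x_+(t)=x\). Then
\[
  u(t,x)=u_0(x_+(0))+\int_0^t R(s,x_+(s))\,ds ,
\]
so that
\[
  |u(t,x)|\le \delta\|\phi\|_{L^\infty}+\int_0^t A(s)\,ds ,
\]
since \(|R|=r\le A\).

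Next I would insert the conclusion of Proposition \ref{prop:A_improve}, \(A(s)\le \frac{K_2}{2}E_0P(s)^\theta\). Then I apply \eqref{P-gamma-int} from Proposition \ref{prop:riccati_P} with \(\gamma=\theta\in(0,1)\), which gives
\[
  \int_0^t A(s)\,ds\le \frac{K_2}{2a_*b_\lambda(1-\theta)}E_0P(0)^{\theta-1}.
\]

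It remains to evaluate \(E_0P(0)^{\theta-1}\) as in the proof of Proposition \ref{prop:E_improve}. There we have \(E_0\le (4c_0)^p\|\phi'\|_{L^p}^p(\delta/\varepsilon)^p\varepsilon\) and \(P(0)\ge c_0m_\phi\,\delta/\varepsilon\), valid once \(\delta\le\delta_3\) is small enough that \(c_0/2\le c(\delta\phi)\le 2c_0\). Since \(\theta-1=1-p<0\), we get \(P(0)^{\theta-1}\le (c_0m_\phi)^{1-p}(\delta/\varepsilon)^{1-p}\). Multiplying, the powers combine as \((\delta/\varepsilon)^{p+1-p}\varepsilon=\delta\), and \(\varepsilon\) cancels exactly. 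Hence \(|u(t,x)|\le C_*\delta\le \frac{K_1}{2}\delta\), using \(K_1\ge 2C_*\).

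The main subtlety is only the bookkeeping of the exponents, namely checking that \(p+(\theta-1)=1\) so that the bound is independent of \(\varepsilon\). One must also make sure the constants appearing in \(C_*\) match exactly. In particular, \(a_*\) and \(b_\lambda\) enter through \eqref{P-gamma-int}, which requires the hypotheses of Proposition \ref{prop:riccati_P} (\eqref{bt-c}, \eqref{bt-a} on \([0,T]\), and \(A(0)<P(0)\), \(L(0)<P(0)\)). These are available in the bootstrap setting. The smallness \(\delta\le\delta_3\) is needed only for the initial-data comparison of \(c(\delta\phi)\) with \(c_0\), so \(\delta_3\) depends only on \(c\) and \(\phi\).
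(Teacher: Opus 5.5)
Your proposal is correct and follows the paper's proof essentially step for step. You integrate along the plus characteristic using \(\frac{d}{ds}u(s,x_+(s))=R\) and \(R\le0\), insert \(A(s)\le\frac{K_2}{2}E_0P(s)^\theta\) from Proposition \ref{prop:A_improve}, apply \eqref{P-gamma-int} with \(\gamma=\theta\), and use the initial-data bounds on \(E_0\) and \(P(0)\) (valid for \(\delta\le\delta_3\)) to get exactly \(C_*\delta\), after which \(K_1\ge 2C_*\) concludes as in the paper.
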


\begin{proof}
By Proposition \ref{prop:riccati_P}, we have \eqref{bt-cone} and \eqref{P-gamma-int} on \([0,T]\).
Let \((t,x)\in[0,T]\times\mathbb R\) be fixed, and let \(x_+(s)\) be the
plus characteristic curve satisfying
\[
  x_+(t)=x.
\]
By the definition of \(R\), we have
\[
  \frac{d}{ds}u(s,x_+(s))=R(s,x_+(s)).
\]
Hence
\[
  u(t,x)
  =
  u_0(x_+(0))
  +
  \int_0^t R(s,x_+(s))\,ds.
\]
By Lemma \ref{zz1}, we have \(R(t,x)\le0\). Thus, putting \(r=-R\), we obtain
\[
  |u(t,x)|
  \le
  \|u_0\|_{L^\infty}
  +
  \int_0^t A(s)\,ds.
\]
By Proposition \ref{prop:A_improve},
\[
  A(s)
  \le
  \frac{K_2}{2}E_0P(s)^\theta.
\]
Therefore
\[
  |u(t,x)|
  \le
  \|u_0\|_{L^\infty}
  +
  \frac{K_2}{2}E_0
  \int_0^t P(s)^\theta\,ds.
\]
Using \eqref{P-gamma-int} with \(\gamma=\theta\), we have
\[
  \int_0^t P(s)^\theta\,ds
  \le
  \frac{1}{a_*b_\lambda(1-\theta)}
  P(0)^{\theta-1}.
\]
Hence
\[
  |u(t,x)|
  \le
  \|u_0\|_{L^\infty}
  +
  \frac{K_2}{2a_*b_\lambda(1-\theta)}
  E_0P(0)^{\theta-1}.
\]

For the initial data in Theorem \ref{mainthm},
\[
  \|u_0\|_{L^\infty}
  \le
  \delta\|\phi\|_{L^\infty}.
\]
Choose \(\delta_3>0\), depending only on \(c\) and \(\phi\), so small that
\[
  \frac{c_0}{2}
  \le
  c(\delta\phi(x))
  \le
  2c_0
\]
for all \(x\in\mathbb R\) and \(0<\delta\le\delta_3\). Then
\[
  P(0)
  =
  \sup_{x\in\mathbb R}
  \left\{
    -2c(\delta\phi(x/\varepsilon))
    \frac{\delta}{\varepsilon}
    \phi'(x/\varepsilon)
  \right\}
  \ge
  c_0m_\phi\frac{\delta}{\varepsilon}.
\]
Moreover,
\[
\begin{aligned}
  E_0
  &=
  \int_{\mathbb R}
  \left|
    -2c(\delta\phi(x/\varepsilon))
    \frac{\delta}{\varepsilon}
    \phi'(x/\varepsilon)
  \right|^p\,dx  \\
  &\le
  (4c_0)^p
  \left(\frac{\delta}{\varepsilon}\right)^p
  \int_{\mathbb R}
  |\phi'(x/\varepsilon)|^p\,dx  \\
  &=
  (4c_0)^p
  \|\phi'\|_{L^p}^p
  \left(\frac{\delta}{\varepsilon}\right)^p
  \varepsilon .
\end{aligned}
\]
Since
\[
  \theta-1=1-p,
\]
we obtain
\[
\begin{aligned}
  E_0P(0)^{\theta-1}
  &\le
  (4c_0)^p
  \|\phi'\|_{L^p}^p
  \left(\frac{\delta}{\varepsilon}\right)^p
  \varepsilon
  \left(
    c_0m_\phi
    \frac{\delta}{\varepsilon}
  \right)^{1-p}  \\
  &=
  (4c_0)^p
  (c_0m_\phi)^{1-p}
  \|\phi'\|_{L^p}^p
  \delta .
\end{aligned}
\]
Therefore
\[
  |u(t,x)|
  \le
  C_*\delta.
\]
Taking the supremum with respect to \((t,x)\in[0,T]\times\mathbb R\), we get
\[
  \sup_{0\le t\le T}\|u(t)\|_{L^\infty}
  \le
  C_*\delta.
\]
By the choice
\[
  K_1\ge 2C_*,
\]
we conclude that
\[
  \sup_{0\le t\le T}\|u(t)\|_{L^\infty}
  \le
  \frac{K_1}{2}\delta.
\]
This completes the proof.
\end{proof}

\subsection{Completion of the proof}

We now complete the proof of Theorem \ref{mainthm}. By Propositions
\ref{prop:E_improve}, \ref{prop:char_improve}, \ref{prop:A_improve}, and
\ref{prop:u_improve}, the bootstrap estimates are improved on \([0,T]\),
provided that \(\delta>0\) is sufficiently small. Since the constants in the
improved estimates are independent of \(T<T^*\), the standard continuity
argument implies that these estimates hold on every compact subinterval of
\([0,T^*)\).

In particular, by Proposition \ref{prop:riccati_P}, we have
\[
  D_-P(t)\ge a_*b_\lambda P(t)^2
\]
for \(0\le t<T^*\). Since \(P\) is locally Lipschitz on \([0,T^*)\), it is
absolutely continuous on every compact subinterval of \([0,T^*)\). Hence
\[
  P'(t)\ge a_*b_\lambda P(t)^2
\]
for almost every \(t\in[0,T^*)\).

Since \(P(0)>0\), it follows that \(P(t)>0\) for \(0\le t<T^*\). Therefore,
for almost every \(t\in[0,T^*)\),
\[
  \frac{d}{dt}\left(\frac1{P(t)}\right)
  =
  -\frac{P'(t)}{P(t)^2}
  \le
  -a_*b_\lambda .
\]
Integrating this inequality over \([0,t]\), we obtain
\[
  \frac1{P(t)}
  \le
  \frac1{P(0)}-a_*b_\lambda t .
\]
Thus
\[
  P(t)
  \ge
  \frac{P(0)}{1-a_*b_\lambda P(0)t}
\]
as long as the right-hand side is finite.

For the initial data in Theorem \ref{mainthm}, taking \(\delta>0\)
sufficiently small, we have
\[
  P(0)\ge c_0m_\phi\frac{\delta}{\varepsilon},
  \qquad
  m_\phi=\sup_{x\in\mathbb R}(-\phi'(x))>0.
\]
Hence the above lower bound implies that \(P(t)\) cannot remain finite beyond
the time
\[
  \frac{1}{a_*b_\lambda P(0)}
  \le
  \frac{1}{a_*b_\lambda c_0m_\phi}
  \frac{\varepsilon}{\delta}.
\]
Consequently,
\[
  T^*
  \le
  C\frac{\varepsilon}{\delta}.
\]
Moreover, Proposition \ref{prop:u_improve} gives
\[
  \sup_{[0,T^*)\times\mathbb R}|u(t,x)|\le C\delta .
\]
Hence, by \eqref{bt-c},
\[
  \frac{c_0}{2}\le c(u(t,x))\le 2c_0
\]
for \((t,x)\in [0,T^*)\times\mathbb R\). Also, since
\[
  u_t=\frac{R+S}{2},
  \qquad
  u_x=\frac{R-S}{2c(u)},
\]
and \(E(t)\le 2E_0\), we obtain
\[
  \int_{\mathbb R}
  \left(|u_t(t,x)|^{2/\lambda}
  +|u_x(t,x)|^{2/\lambda}\right)\,dx
  \le C E_0
  \le C\delta^{2/\lambda}\varepsilon^{1-2/\lambda}.
\]

It remains to identify the type of breakdown. Since \(u\) remains bounded
and \(c(u)\) is bounded away from zero, the continuation criterion for
classical solutions implies that
\[
\limsup_{t\to T^*}
\left(\|u_t(t)\|_{L^\infty}+\|u_x(t)\|_{L^\infty}\right)=\infty .
\]
This completes the proof of Theorem \ref{mainthm}.

\section{Proof of Proposition \ref{prop:holder}}

We first prove the estimate in the time direction. Proceeding as in the derivation of \eqref{lp-identity}, subtracting the identity for
\(|R|^p\) from that for \(|S|^p\), we obtain
\begin{equation}\label{lp-time-identity}
\begin{aligned}
  &\frac1p
  \left\{
    \partial_t(|S|^p-|R|^p)
    +
    \partial_x\left(c(u)(|R|^p+|S|^p)\right)
  \right\} \\
  &\qquad =
  \left(
    \frac12-\frac{\lambda}{4}
  \right)
  \frac{c'(u)}{c(u)}
  (S-R)
  \left\{
    S|R|^{p-2}R
    +
    R|S|^{p-2}S
  \right\}.
\end{aligned}
\end{equation}
As in the proof of Proposition \ref{prop:E_improve}, we have
\[
\begin{aligned}
  &\left|
    (S-R)
    \left\{
      S|R|^{p-2}R
      +
      R|S|^{p-2}S
    \right\}
  \right|  \\
  &\qquad\le
  4
  \left\{
    A(t)r^p
    +
    A(t)^{p-1}P(t)^{2-p}|S|^p
  \right\}.
\end{aligned}
\]
Let \(0\le t_1<t_2<T^*\) and \(x\in\mathbb R\). Integrating
\eqref{lp-time-identity} over \([t_1,t_2]\times(-\infty,x]\), we get
\[
\begin{aligned}
  &\int_{t_1}^{t_2}
  c(u(s,x))
  \left(
    |R(s,x)|^p+|S(s,x)|^p
  \right)\,ds \\
  &\quad\le
  C\{E(t_1)+E(t_2)\} \\
  &\qquad
  +
  C\int_{t_1}^{t_2}
  \left\{
    A(s)+A(s)^{p-1}P(s)^{2-p}
  \right\}E(s)\,ds .
\end{aligned}
\]
By Proposition \ref{prop:E_improve},
\[
  E(s)\le 2E_0.
\]
By Proposition \ref{prop:A_improve},
\[
  A(s)\le C E_0P(s)^\theta.
\]
Thus
\[
  A(s)^{p-1}P(s)^{2-p}
  \le
  C E_0^{p-1}P(s)^{p\theta}.
\]
Using \eqref{P-gamma-int} with \(\gamma=\theta\) and \(\gamma=p\theta\), we obtain
\[
  \int_{t_1}^{t_2}
  \left\{
    |R(s,x)|^p+|S(s,x)|^p
  \right\}\,ds
  \le
  C E_0 .
\]
Since
\[
  u_t=\frac{R+S}{2},
\]
we have
\[
  \int_{t_1}^{t_2}|u_t(s,x)|^p\,ds
  \le
  C E_0.
\]
Therefore, by H\"older's inequality,
\[
\begin{aligned}
  |u(t,x)-u(s,x)|
  &\le
\left|  \int_s^t |u_t(\tau,x)|\,d\tau \right| \\
  &\le
  |t-s|^{1-1/p}
  \left(
    \int_s^t |u_t(\tau,x)|^p\,d\tau
  \right)^{1/p}  \\
  &\le
  C E_0^{1/p}|t-s|^{1-1/p}.
\end{aligned}
\]

Next we prove the estimate in the space direction. Since
\[
  u_x=\frac{R-S}{2c(u)},
\]
and \(c(u)\) is bounded away from zero on \([0,T^*)\), Proposition
\ref{prop:E_improve} gives
\[
  \|u_x(t)\|_{L^p}
  \le
  C
  \left(
    \|R(t)\|_{L^p}
    +
    \|S(t)\|_{L^p}
  \right)
  \le
  C E_0^{1/p}.
\]
Hence
\[
\begin{aligned}
  |u(t,x)-u(t,y)|
  &\le
\left|  \int_y^x |u_x(t,z)|\,dz \right|  \\
  &\le
  |x-y|^{1-1/p}\|u_x(t)\|_{L^p}  \\
  &\le
  C E_0^{1/p}|x-y|^{1-1/p}.
\end{aligned}
\]
Combining the time and space estimates, we obtain
\[
  |u(t,x)-u(s,y)|
  \le
  C E_0^{1/p}
  \left(
    |t-s|^{1-1/p}
    +
    |x-y|^{1-1/p}
  \right).
\]

For the initial data in Theorem \ref{mainthm},
\[
  E_0\le
  C
  \left(\frac{\delta}{\varepsilon}\right)^p
  \varepsilon.
\]
Thus
\[
  E_0^{1/p}
  \le
  C\delta\varepsilon^{1/p-1}
  =
  C\delta\varepsilon^{\lambda/2-1}.
\]
Since
\[
  1-\frac1p=1-\frac{\lambda}{2},
\]
we get
\[
  |u(t,x)-u(s,y)|
  \le
  C\delta\varepsilon^{\lambda/2-1}
  \left(
    |t-s|^{1-\lambda/2}
    +
    |x-y|^{1-\lambda/2}
  \right).
\]

Finally, for each fixed \(x\in\mathbb R\), the time estimate shows that
\(\{u(t,x)\}_{0\le t<T^*}\) is a Cauchy family as \(t\to T^*\). Hence the
limit
\[
  u(T^*,x)=\lim_{t\to T^*}u(t,x)
\]
exists. Letting \(t\to T^*\) in the space estimate, we obtain
\[
  |u(T^*,x)-u(T^*,y)|
  \le
  C\delta\varepsilon^{\lambda/2-1}
  |x-y|^{1-\lambda/2}.
\]
This completes the proof.

\bibliographystyle{amsplain}
\bibliography{bibliography}

\end{document}